\newtheorem{thm}{Theorem}[section]
\newtheorem{cor}[thm]{Corollary}
\newtheorem{prop}[thm]{Proposition}
\newtheorem{lem}[thm]{Lemma}
\theoremstyle{definition}
\newtheorem{defn}[thm]{Definition}
\newtheorem{exmp}[thm]{Example}
\newtheorem{notn}[thm]{Notation}
\newtheorem{conv}[thm]{Convention}
\newtheorem{ass}[thm]{Assumption}
\theoremstyle{remark}
\newtheorem{rem}[thm]{Remark}
\newtheorem{notation}[thm]{Notation}
\newcommand{\Z}{\mathbb{Z}\xspace}
\newcommand{\Q}{\mathbb{Q}\xspace}
\DeclareMathOperator{\Spec}{Spec}
\DeclareMathOperator{\res}{res}
\DeclareMathOperator{\Tr}{Tr}
\DeclareMathOperator{\ord}{ord}
\DeclareMathOperator{\dv}{div}
\DeclareMathOperator{\img}{Im}
\DeclareMathOperator{\Reg}{Reg}
\DeclareMathOperator{\Cor}{Cor}
\DeclareMathOperator{\Ac}{at}
\DeclareMathOperator{\supp}{supp}
\DeclareMathOperator{\ch}{char}
\DeclareMathOperator{\Res}{Res}
\DeclareMathOperator{\Pic}{Pic}
\DeclareMathOperator{\HI}{HI}
\DeclareMathOperator{\Nis}{Nis}
\DeclareMathOperator{\NST}{NST}
\let\c@equation\c@thm
\numberwithin{equation}{section}
\newcommand{\textlatin }
\title{The local symbol complex of a Reciprocity Functor}
\author{Evangelia Gazaki}
\email{valiagaz@math.uchicago.edu}
\address{Department of Mathematics\\ University of Chicago\\ 5734 University Ave. Chicago, Illinois 60637}
\begin{document}
\maketitle
\begin{abstract} For a reciprocity functor $\mathcal{M}$ we consider the local symbol complex
\[(\mathcal{M}\otimes^{M}\mathbb{G}_{m})(\eta_{C})\to\bigoplus_{P\in C}\mathcal{M}(k)\to\mathcal{M}(k),\] where $C$ is a smooth complete curve over an algebraically closed field $k$ with generic point $\eta_{C}$ and $\otimes^{M}$ is the product of Mackey functors. We prove that if $\mathcal{M}$ satisfies certain assumptions, then the homology of this complex is isomorphic to the $K$-group of reciprocity functors $T(\mathcal{M},\underline{CH}_{0}(C)^{0})(\Spec k)$.
\end{abstract}
\section{introduction} Let $F$ be a perfect field.
We consider the category $\mathcal{E}_{F}$  of finitely generated field extensions of $F$. In \cite{IR}, F. Ivorra and K. R\"{u}lling  created a theory of reciprocity functors. A reciprocity functor is a presheaf with transfers in the category $\Reg^{\leq 1}$ of regular schemes of dimension at most one over some field $k\in\mathcal{E}_{F}$ that satisfies various properties.\\
Some examples of reciprocity functors include commutative algebraic groups, homotopy invariant Nisnevich sheaves with transfers, K\"{a}hler differentials. Moreover, if $\mathcal{M}_{1},\dots,\mathcal{M}_{r}$ are reciprocity functors, Ivorra and R\"{u}lling construct a $K$-group $T(\mathcal{M}_{1},\dots,\mathcal{M}_{r})$  which is itself a reciprocity functor. \\
One of the crucial properties of a reciprocity functor $\mathcal{M}$ is that it has local symbols. Namely, if $C$ is a smooth, complete and geometrically connected curve over some field  $k\in\mathcal{E}_{F}$ with generic point $\eta$, then at each closed point $P\in C$ there is a local symbol assignment
\[(.;.)_{P}:\mathcal{M}(\eta)\times\mathbb{G}_{m}(\eta)\to\mathcal{M}(k),\] satisfying three characterizing properties, one of which is a reciprocity relation $\displaystyle\sum_{P\in C}(g;f)_{P}=0$, for every $g\in\mathcal{M}(\eta)$ and $f\in\mathbb{G}_{m}(\eta)$. We note here that if $G$ is a commutative algebraic group over an algebraically closed field $k$, then the local symbol of $G$ coincides with the local symbol constructed by Rosenlicht-Serre in \cite{Ser}.
The reciprocity relation induces a local symbol complex $(\underline{\underline{C}})$ \[(\mathcal{M}\bigotimes^{M}\mathbb{G}_{m})(\eta)\stackrel{((.;.)_{P})_{P\in C}}{
\longrightarrow}\bigoplus_{P\in C}\mathcal{M}(k)
\stackrel{\sum_{P}}{\longrightarrow}\mathcal{M}(k),\] where by $\otimes^{M}$ we denote the product of Mackey functors (see def. \ref{Mac}). The main goal of this article is to give a description of the homology $H(\underline{\underline{C}})$ of the above complex in terms of $K$-groups of reciprocity functors. Our computations work well for curves $C$ over an algebraically closed field $k$. In the last section we describe some special cases where the method could be refined to include non-algebraically closed base fields. To obtain a concrete result, we need to impose two conditions on the reciprocity functor $T(\mathcal{M},\underline{CH}_{0}(C)^{0})$ (see assumptions \ref{AS1}, \ref{AS2}).  In section 3 we prove the following theorem.
\begin{thm}\label{intro} Let $C$ be a smooth, complete curve over an algebraically closed field $k$. Let $\mathcal{M}$ be a reciprocity functor such  that the $K$-group of reciprocity functors $T(\mathcal{M},\underline{CH}_{0}(C)^{0})$ satisfies the assumptions \ref{AS1} and \ref{AS2}. Then the homology of the local symbol complex $(\underline{\underline{C}})$ is canonically isomorphic to the $K$-group $T(\mathcal{M},\underline{CH}_{0}(C)^{0})(\Spec k)$.
\end{thm}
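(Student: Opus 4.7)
The plan is to construct a natural homomorphism $\Phi:H(\underline{\underline{C}})\to T(\mathcal{M},\underline{CH}_{0}(C)^{0})(\Spec k)$ and to exhibit an inverse with the help of assumptions \ref{AS1} and \ref{AS2}. Fix a base point $P_{0}\in C(k)$; since $k$ is algebraically closed, every closed point of $C$ is $k$-rational, so no transfer issues intervene in the formula. For $(m_{P})_{P}\in \ker(\sum_{P})\subset\bigoplus_{P}\mathcal{M}(k)$ set
\[\Phi\bigl((m_{P})_{P}\bigr)\;=\;\sum_{P\in C}\{m_{P};\,[P-P_{0}]\},\]
where $[P-P_{0}]\in CH_{0}(C)^{0}=\underline{CH}_{0}(C)^{0}(\Spec k)$. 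The constraint $\sum_{P}m_{P}=0$ makes this sum independent of the choice of $P_{0}$: replacing $P_{0}$ by $P_{1}$ alters it by $\{\sum_{P}m_{P};\,[P_{1}-P_{0}]\}=0$.

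The first task is to verify that $\Phi$ vanishes on the image of the local symbol, so that it descends to a map on $H(\underline{\underline{C}})$. Concretely, for $(g,f)\in(\mathcal{M}\otimes^{M}\mathbb{G}_{m})(\eta_{C})$ one needs
\[\sum_{P\in C}\{(g;f)_{P};\,[P-P_{0}]\}=0 \quad \text{in }T(\mathcal{M},\underline{CH}_{0}(C)^{0})(\Spec k).\]
The idea is to interpret this sum, through the compatibility of local symbols with the contraction maps in the Ivorra--R\"{u}lling formalism, as the total residue at the closed points of $C$ of a class built from $g$, $f$, and the generic divisor $[\eta_{C}-P_{0}]$ in a threefold $K$-group. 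Because the divisor of the rational function $f$ is principal, hence trivial in $\underline{CH}_{0}(C)^{0}$, the built-in reciprocity for $T(\mathcal{M},\mathbb{G}_{m},\underline{CH}_{0}(C)^{0})$ forces the sum to vanish.

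Next, I would construct a candidate inverse $\Psi$. Assumption \ref{AS1} is expected to guarantee that $T(\mathcal{M},\underline{CH}_{0}(C)^{0})(\Spec k)$ is generated by the symbols $\{m;[P-P_{0}]\}$ with $m\in\mathcal{M}(k)$ and $P$ a closed point of $C$. On such a generator, put
\[\Psi\bigl(\{m;[P-P_{0}]\}\bigr)\;=\;[m\cdot e_{P}-m\cdot e_{P_{0}}]\in H(\underline{\underline{C}}),\]
where $e_{Q}$ denotes the tuple supported at $Q$ with value $1$, and extend $\Z$-bilinearly. The tuple on the right lies in $\ker(\sum_{P})$, and the computation $\Phi\circ\Psi=\mathrm{id}$ on generators is immediate.

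The main obstacle is the well-definedness of $\Psi$: every defining relation of $T(\mathcal{M},\underline{CH}_{0}(C)^{0})(\Spec k)$ --- Steinberg-type relations among the entries, transfer/projection formulae for finite morphisms of curves, and above all the Weil-reciprocity relation for rational functions --- must be realised as $\sum_{P}(g;f)_{P}$ for a suitable $(g,f)\in(\mathcal{M}\otimes^{M}\mathbb{G}_{m})(\eta_{C})$, so that $\Psi$ of each relation already lies in $\img((.;.)_{P})_{P\in C}$. This is precisely what assumption \ref{AS2} should control, by constraining the relations of the $K$-group to those visible through curve reciprocity on $C$. Once $\Psi$ is shown to be well-defined, the identity $\Psi\circ\Phi=\mathrm{id}$ is immediate on tuples supported at finitely many points, and the theorem follows.
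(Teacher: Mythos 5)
Your overall architecture -- a map $\Phi$ sending $(m_{P})_{P}$ to $\sum_{P}m_{P}\otimes([P]-[P_{0}])$ and an inverse $\Psi$ sending a generator $m\otimes([P]-[P_{0}])$ to the tuple supported at $P$ and $P_{0}$ -- is exactly the paper's. But you have misassigned the roles of the two assumptions, and in doing so you have skipped the two steps that carry all the technical weight.

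First, assumption \ref{AS1} is not a generation statement. Generation of $T(\mathcal{M},\underline{CH}_{0}(C)^{0})(k)$ by symbols $m\otimes([P]-[P_{0}])$ is essentially automatic over an algebraically closed field. Assumption \ref{AS1} is what makes $\Phi$ well defined on $H(\underline{\underline{C}})$. Your sketch of that step (``compatibility of local symbols with the contraction maps'' plus reciprocity in a threefold $K$-group) hides the real difficulty: one wants $\Phi\bigl((g;f)_{P}\bigr)=(g\otimes h;f)_{P}$ with $h=[\eta_{C}]$, and this identity is clear only at points $P$ where $g$ is regular, since there $(g;f)_{P}=\ord_{P}(f)s_{P}(g)$ and $s_{P}(h)=[P]$. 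At a point $P$ in the support of the modulus of $g$ the local symbol is defined nonlocally via an auxiliary function $f_{P}$, and the discrepancy between $\phi_{P}\bigl((g;f)_{P}\bigr)$ and $(g\otimes h;f)_{P}$ is exactly a symbol $(g\otimes(h_{0}-h);f)_{P}$ with $h_{0}=\res_{\eta/k}([P])$, which satisfies $s_{P}(h-h_{0})=0$; assumption \ref{AS1} is invoked precisely to kill this term. Without it your vanishing argument does not close.

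Second, assumption \ref{AS2} does not say that the relations of $T(\mathcal{M},\underline{CH}_{0}(C)^{0})(k)$ are ``those visible through curve reciprocity on $C$''; it identifies this group with $K^{geo}(k;\mathcal{M},\underline{CH}_{0}(C)^{0})$, whose defining relations are reciprocity relations on \emph{arbitrary} smooth complete curves $D$ over $k$. The substantive content of the well-definedness of $\Psi$ is showing that such a relation, coming from $g_{1}\in\mathcal{M}(\eta_{D})$, $g_{2}\in\underline{CH}_{0}(C)^{0}(\eta_{D})$ and $f\equiv 1\bmod\mathfrak{m}$, maps into $\img\bigl((.;.)_{C}\bigr)$. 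The paper does this by writing $g_{2}$ in terms of a closed point $h$ of $C\times k(D)$, which produces a third curve $E$ with coverings $\lambda:E\to D$ and $\mu:E\to C$, and then recognizing the resulting tuple as the collection of local symbols of $\bigl(\lambda^{\star}(g_{1}),\lambda^{\star}(f)\bigr)$ computed along $\mu$ -- this is admissible exactly because the source of the complex is the Mackey product $(\mathcal{M}\otimes^{M}\mathbb{G}_{m})(\eta_{C})$, which contains sections over the finite extension $k(E)/k(C)$. Your proposal acknowledges that each relation ``must be realised as $\sum_{P}(g;f)_{P}$'' but offers no mechanism for doing so; that covering construction, together with the compatibility $s_{R}(g_{1})=s_{Q}(\lambda^{\star}(g_{1}))$ over an algebraically closed field, is the missing idea.
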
 Here $\underline{CH}_{0}(C)^{0}$ is a reciprocity functor that is identified with the Jacobian variety $J$ of $C$. \\
In section 4 we give some examples of reciprocity functors that satisfy the two assumptions. In particular, we prove the following theorem.
\begin{thm} Let $\mathcal{F}_{1},\dots,\mathcal{F}_{r}$ be homotopy invariant Nisnevich sheaves with transfers, and consider the reciprocity functor $\mathcal{M}=T(\mathcal{F}_{1},\dots,\mathcal{F}_{r})$. Let $C$ be a smooth, complete curve over an algebraically closed field $k$. Then there is an isomorphism
\[H(\underline{\underline{C}})\simeq T(\mathcal{F}_{1},\dots,\mathcal{F}_{r},\underline{CH}_{0}(C)^{0})(\Spec k).\] In particular, if $G_{1},\dots,G_{r}$ are semi-abelian varieties over $k$, then we obtain an isomorphism
\[H(\underline{\underline{C}})\simeq T(G_{1},\dots,G_{r},\underline{CH}_{0}(C)^{0})(\Spec k)\simeq
K(k;G_{1},\dots,G_{r},\underline{CH}_{0}(C)^{0}),\] where $K(k;G_{1},\dots,G_{r},\underline{CH}_{0}(C)^{0})$ is the Somekawa $K$-group attached to $G_{1},\dots,G_{r}$.
\end{thm}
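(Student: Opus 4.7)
The strategy is to reduce the theorem to Theorem \ref{intro} by specialising to $\mathcal{M} = T(\mathcal{F}_1, \dots, \mathcal{F}_r)$. Since homotopy invariant Nisnevich sheaves with transfers are reciprocity functors and the Ivorra--R\"ulling construction $T$ takes reciprocity functors to reciprocity functors, this is a legitimate input for the main theorem. The plan then has three moving parts: verify the two assumptions \ref{AS1} and \ref{AS2} for this specific $\mathcal{M}$, invoke Theorem \ref{intro}, and rewrite the resulting $K$-group in a symmetric form using the associativity of $T$.

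For the verification step I would exploit the fact that $\underline{CH}_{0}(C)^{0}$ is identified with the Jacobian variety $J$ of $C$, hence a semi-abelian variety and in particular a homotopy invariant Nisnevich sheaf with transfers. Consequently $T(\mathcal{F}_1, \dots, \mathcal{F}_r, \underline{CH}_{0}(C)^{0})$ is built entirely out of HI Nisnevich sheaves with transfers, for which Suslin--Voevodsky's theory of motivic complexes supplies concrete descriptions of sections on smooth curves, at generic points and at henselisations of closed points. These descriptions should make the Nisnevich-sheaf- and Mackey-theoretic compatibilities encoded in \ref{AS1} and \ref{AS2} verifiable by a direct computation. Once this is done, Theorem \ref{intro} yields
\[H(\underline{\underline{C}}) \simeq T\bigl(T(\mathcal{F}_1, \dots, \mathcal{F}_r),\, \underline{CH}_{0}(C)^{0}\bigr)(\Spec k),\]
and the associativity / multilinearity of the $K$-construction (established by Ivorra--R\"ulling) identifies the right-hand side canonically with $T(\mathcal{F}_1, \dots, \mathcal{F}_r, \underline{CH}_{0}(C)^{0})(\Spec k)$. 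This gives the first claimed isomorphism. The second one, in the semi-abelian case where each $\mathcal{F}_i = G_i$ is a semi-abelian variety and $\underline{CH}_{0}(C)^{0} = J$ is as well, follows from the comparison theorem identifying $T(H_1, \dots, H_s)(\Spec k)$ with the Somekawa $K$-group $K(k; H_1, \dots, H_s)$ whenever all the $H_i$ are semi-abelian varieties, applied with $(H_1, \dots, H_{r+1}) = (G_1, \dots, G_r, \underline{CH}_{0}(C)^{0})$.

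The main obstacle is clearly the verification of \ref{AS1} and \ref{AS2}. These assumptions, without which Theorem \ref{intro} does not apply, encode nontrivial compatibilities between Nisnevich sheafification, the Mackey product $\otimes^{M}$ appearing in the local symbol complex, and the local symbol itself. The technical bulk of section 4 should therefore go into making these compatibilities explicit in the HI-Nisnevich-with-transfers setting and checking them via the available motivic-cohomological or Somekawa-style presentations of $T(\mathcal{F}_1, \dots, \mathcal{F}_r, \underline{CH}_{0}(C)^{0})$; everything else in the proof is formal.
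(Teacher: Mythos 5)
Your overall skeleton is the same as the paper's: verify assumptions \ref{AS1} and \ref{AS2} for $\mathcal{M}=T(\mathcal{F}_{1},\dots,\mathcal{F}_{r})$, apply theorem \ref{iso}, and use the associativity isomorphism $T(T(\mathcal{F}_{1},\dots,\mathcal{F}_{r}),\underline{CH}_{0}(C)^{0})\simeq T(\mathcal{F}_{1},\dots,\mathcal{F}_{r},\underline{CH}_{0}(C)^{0})$ (which, as in remark \ref{prod}, does hold in the $\HI_{\Nis}$ setting by theorem 5.1.8 of \cite{IR}). The identification with the Somekawa group in the semi-abelian case is also handled as you say. The problem is that you have deferred exactly the step that carries the mathematical content, and the route you sketch for it (``Suslin--Voevodsky descriptions of sections make the compatibilities verifiable by direct computation'') does not identify the mechanism that actually makes \ref{AS1} true. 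Assumption \ref{AS1} is not a formal sheaf-theoretic compatibility: for a general reciprocity functor, when $P$ lies in the support of the modulus of $g$ the symbol $(g\otimes h;f)_{P}$ is \emph{defined globally}, as $-\sum_{Q\notin S}\ord_{Q}(f_{P})\Tr_{Q/k}(s_{Q}(g\otimes h))$ for an auxiliary function $f_{P}$, so there is no a priori reason it should vanish just because $s_{P}(h)=0$. What is needed is a genuinely local expression for the symbol at $P$, and the paper stresses that this is available only in very special situations.

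The two missing ingredients, both imported from \cite{KY}, are the following. For \ref{AS2}, the comparison theorem (theorem 5.1.8 in \cite{IR}) gives $K^{geo}(k;\mathcal{F}_{1},\dots,\mathcal{F}_{r},\underline{CH}_{0}(C)^{0})=T(\mathcal{F}_{1},\dots,\mathcal{F}_{r},\underline{CH}_{0}(C)^{0})(k)=K(k;\mathcal{F}_{1},\dots,\mathcal{F}_{r},\underline{CH}_{0}(C)^{0})$; this is a cited theorem, not a computation you can expect to reproduce from scratch. For \ref{AS1}, the argument first treats curve-like sheaves, where one may take the $g_{i}$ to have disjoint supports (prop.\ 11.11 in \cite{KY}) and where the local symbol has the explicit form $s_{P}(g_{1})\otimes\dots\otimes\partial_{P}(g_{i},f)\otimes\dots\otimes s_{P}(h)$, visibly killed by $s_{P}(h)=0$; the general case is then reduced to this one by the adjunction device of remark \ref{curvelike}: a section $g_{i}\in\mathcal{F}_{i}(U_{i})$ induces a map $h_{0}^{\Nis}(U_{i})\to\mathcal{F}_{i}$ sending the class of the diagonal to $g_{i}$, and $h_{0}^{\Nis}(U_{i})$ is curve-like. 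Without this reduction-to-curve-like step your verification of \ref{AS1} has no actual argument behind it, so as written the proposal has a genuine gap at its central point, even though the surrounding architecture is correct.
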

Another case where the assumptions of theorem \ref{intro} are satisfied is when $\mathcal{M}=T(\mathcal{M}_{1},\dots,\mathcal{M}_{r})$ such that $\mathcal{M}_{i}=\mathbb{G}_{a}$ for some $i\in\{1,\dots,r\}$. Using the main result of \cite{RuYa} together with theorem 5.4.7. in \cite{IR}, we obtain the following corollary.
\begin{cor} Let $\mathcal{M}_{1},\dots,\mathcal{M}_{r}$ be reciprocity functors. Let $\mathcal{M}=T(\mathbb{G}_{a},\mathcal{M}_{1},\dots,\mathcal{M}_{r})$. Then for any smooth complete curve $C$ over $k$, $H(\underline{\underline{C}})=0$. In particular, if $\ch k=0$, the complex
$\Omega_{k(C)}^{n+1}\stackrel{\Res_{P}}{\longrightarrow}\bigoplus_{P\in C}\Omega^{n}_{k}
\stackrel{\sum_{P}}{\longrightarrow}\Omega_{k}^{n}$ is exact.
\end{cor}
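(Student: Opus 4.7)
The plan is to apply Theorem \ref{intro} to $\mathcal{M}=T(\mathbb{G}_a,\mathcal{M}_1,\ldots,\mathcal{M}_r)$. By associativity of the $T$-construction (proved in \cite{IR}), we have a canonical isomorphism
\[T(\mathcal{M},\underline{CH}_0(C)^0)\;\cong\;T(\mathbb{G}_a,\mathcal{M}_1,\ldots,\mathcal{M}_r,\underline{CH}_0(C)^0),\]
so the first task is to verify that this reciprocity functor satisfies assumptions \ref{AS1} and \ref{AS2}. Granted this, Theorem \ref{intro} identifies $H(\underline{\underline{C}})$ with the value $T(\mathbb{G}_a,\mathcal{M}_1,\ldots,\mathcal{M}_r,\underline{CH}_0(C)^0)(\Spec k)$.

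The vanishing of this $K$-group is the heart of the argument. I would invoke the main result of \cite{RuYa}, which controls $K$-groups of reciprocity functors in which $\mathbb{G}_a$ appears as an entry, together with Theorem 5.4.7 of \cite{IR}, which supplies the compatibility machinery needed to interpret such $K$-groups via K\"{a}hler differentials. The combination forces the $T$-group above to vanish when evaluated at $\Spec k$ for $k$ algebraically closed, with the semi-abelian factor $\underline{CH}_0(C)^0$ playing a decisive role in killing the residual K\"{a}hler-differentials contribution.

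For the ``in particular'' statement in characteristic zero, I would specialize to $r=n$ and $\mathcal{M}_i=\mathbb{G}_m$ for every $i$. Theorem 5.4.7 of \cite{IR} identifies $\mathcal{M}(k)=T(\mathbb{G}_a,\mathbb{G}_m^{\times n})(k)$ with $\Omega^n_k$ and, correspondingly, $(\mathcal{M}\otimes^M\mathbb{G}_m)(\eta_C)$ with $\Omega^{n+1}_{k(C)}$, under which the local symbols $(\,;\,)_P$ become the residue maps $\Res_P$ and the trace $\sum_P$ matches the sum of residues. Exactness of the residue complex at its middle term then drops out of the vanishing of $H(\underline{\underline{C}})$ established above.

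The principal obstacles will be the verification of \ref{AS1} and \ref{AS2} in this generality, and the precise identification of the Mackey tensor product with K\"{a}hler differentials so that the first map becomes genuinely surjective onto $\Omega^{n+1}_{k(C)}$; only then does exactness of the three-term complex in the corollary truly follow from vanishing of the middle homology.
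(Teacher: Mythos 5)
Your high-level strategy matches the paper's: reduce to Theorem \ref{intro}, use the R\"ulling--Yamazaki vanishing theorem to kill the relevant $K$-group, and then use Theorem 5.4.7 of \cite{IR} to translate the statement into the residue complex of K\"ahler differentials in characteristic zero. Two points need correction, one minor and one substantive.

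The minor point: you invoke ``associativity of the $T$-construction (proved in \cite{IR})'' to identify $T(\mathcal{M},\underline{CH}_0(C)^0)$ with $T(\mathbb{G}_a,\mathcal{M}_1,\dots,\mathcal{M}_r,\underline{CH}_0(C)^0)$. For general reciprocity functors associativity is \emph{not} known; Remark \ref{prod} records only a functorial \emph{surjection} $T(\mathbb{G}_a,\mathcal{M}_1,\dots,\mathcal{M}_r,\underline{CH}_0(C)^0)\twoheadrightarrow T(T(\mathbb{G}_a,\mathcal{M}_1,\dots,\mathcal{M}_r),\underline{CH}_0(C)^0)$ (associativity is established only when all entries are homotopy invariant Nisnevich sheaves). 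Since the source vanishes by \cite{RuYa}, the surjection suffices for your purpose, but the claim as stated is not available.

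The substantive gap is Assumption \ref{AS2}, which you flag as an ``obstacle'' but do not address, and which is precisely the nontrivial content here (it is the paper's Lemma \ref{Ga}). The group $T(T(\mathbb{G}_a,\dots),\underline{CH}_0(C)^0)(k)$ is only a \emph{quotient} of the geometric $K$-group $K^{geo}(k;T(\mathbb{G}_a,\dots),\underline{CH}_0(C)^0)$, so its vanishing via \cite{RuYa} does not imply \ref{AS2}; one must separately prove that $K^{geo}$ itself vanishes. This matters because the map $\Psi$ of Proposition \ref{Psi} is defined on $K^{geo}$ and surjects onto $H(\underline{\underline{C}})$, so without the vanishing of $K^{geo}$ you cannot conclude $H(\underline{\underline{C}})=0$. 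The paper handles this by constructing a local symbol valued in $K^{geo}$ itself --- the key technical input being that $T(\mathbb{G}_a,\mathcal{M}_1,\dots,\mathcal{M}_r)$ is a sheaf of $\kappa$-vector spaces ($\kappa=\Q$ or $\mathbb{F}_p$), so that tensoring with the flat $\kappa$-module $\kappa\otimes_{\Z}\underline{CH}_0(C)^0(\mathcal{O}_{C,P})$ preserves the injectivity needed to define the symbol --- and then rerunning the R\"ulling--Yamazaki local-symbol computation inside $K^{geo}$. Your proposal, as written, does not contain this step, and without it the appeal to Theorem \ref{intro} does not go through. (Assumption \ref{AS1}, by contrast, really is a formality once the $T$-group vanishes, and your treatment of the ``in particular'' statement via the factorization of the complex through $\Omega^{n+1}_{k(C)}$ is correct.)
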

The idea for theorem \ref{intro} stems from the special case when $\mathcal{M}=\mathbb{G}_{m}$. In this case the local symbol
$k(C)^{\times}\otimes^{M}k(C)^{\times}\stackrel{(.;.)_{P}}{\longrightarrow} k^{\times}$  at a closed point $P\in C$ factors through the group $T(\mathbb{G}_{m},\mathbb{G}_{m})(\eta_{C})$. By a theorem in \cite{IR} this group is isomorphic to the usual Milnor $K$-group $K_{2}^{M}(k(C))$ and we recover the Milnor complex
\[K_{2}^{M}(k(C))\to\bigoplus_{P\in C}k^{\times}\stackrel{\sum_{P}}{\longrightarrow}k^{\times}.\]
This complex was studied by M. Somekawa in \cite{Som} and R. Akhtar in \cite{Ak}. Using different methods, they both prove that the homology of the above complex is isomorphic to the Somekawa $K$-group $K(k;\mathbb{G}_{m},\underline{CH}_{0}(C)^{0})$. This group turns out to be isomorphic to the group $T(\mathbb{G}_{m},\underline{CH}_{0}(C)^{0})(\Spec k)$. (by \cite{IR}, theorem 5.1.8. and \cite{KY}, theorem 11.14). A similar result was proved by T. Hiranouchi in \cite{Hir} for his Somekawa-type additive $K$-groups. Our method to prove theorem \ref{intro} is similar to the method used by R. Akhtar and T. Hiranouchi.
\begin{notation} For a smooth connected variety $X$ over $k\in\mathcal{E}_{F}$, we denote by $k(X)$ the function field of $X$.
Let $C$ be a smooth complete curve over $k\in\mathcal{E}_{F}$ and $P\in C$ a closed point. We write $\ord_{P}$ for the normalized discrete valuation on $k(C)$ defined by the point $P$ and for an integer $n\geq 1$, we put
$U_{C,P}^{(n)}=\{f\in k(C)^{\times}:\ord_{P}(1-f)\geq n\}$.
\end{notation}
\vspace{5pt}
\textbf{Acknowledgement:} I would like to express my great gratitude to my advisor, Professor Kazuya Kato, for his kind guidance throughout the process of obtaining the current result. Moreover, I would like to thank very heartily Professor Kay R\"{u}lling whose advice and feedback was very essential for the completion of this article. K. R\"{u}lling provided me with great suggestions which improved very significantly this paper. He also shared his great expertise about reciprocity functors with me. Finally, I would like to deeply thank my referee for pointing out my mistakes and giving very useful suggestions to improve the paper. In particular, the proof of lemma \ref{Ga} was the referee's suggestion.
\vspace{5pt}
\section{Review of Definitions}
\subsection{Reciprocity Functors} Let $\Reg^{\leq 1}$ be the category with objects regular $F$-schemes of dimension at most one which are separated and of finite type over some $k\in\mathcal{E}_{F}$. Let $\Reg^{\leq 1}\Cor$ be the category with the same objects as $\Reg^{\leq 1}$ and with morphisms finite correspondences. A reciprocity functor $\mathcal{M}$ is a presheaf of abelian groups on $\Reg^{\leq 1}\Cor$ which satisfies various properties. Here we only  recall those properties that we will need later in the paper.
\begin{notation} Let $\mathcal{M}$ be a reciprocity functor. For $k\in\mathcal{E}_{F}$ we will write $\mathcal{M}(k):=\mathcal{M}(\Spec k)$.
\end{notation}
Let $E/k$ be a finite extension of fields in $\mathcal{E}_{F}$. The morphism $\Spec E\to\Spec k$ induces a pull-back map $\mathcal{M}(k)\to\mathcal{M}(E)$, which we call restriction and will denote by  $\res_{E/k}$. Moreover, there is a finite correspondence $\Spec k\to\Spec E$ which induces a push-forward $\mathcal{M}(E)\to\mathcal{M}(k)$, which we will call the trace and denote it by
$\Tr_{E/k}$. \\
\underline{Injectivity:} Let $C$ be a smooth, complete curve over $k\in\mathcal{E}_{F}$. Each open set $U\subset C$ induces a pull-back map $\mathcal{M}(C)\to\mathcal{M}(U)$ that is required to be injective. Additionally, if $\eta_{C}$ is the generic point of $C$, we have an isomorphism
\[\lim\limits_{\longrightarrow}\mathcal{M}(U)\stackrel{\simeq}{\longrightarrow}
\mathcal{M}(\eta_{C}),\] where the limit extends over all open subsets $U\subset C$. \\
\underline{Specialization and Trace maps:} Let $P\in C$ be a closed point. For each open $U\subset C$ with $P\in U$, the closed immersion $P\hookrightarrow U$ induces $\mathcal{M}(U)\to\mathcal{M}(P)$. We consider the stalk $\mathcal{M}_{C,P}=\lim\limits_{\longrightarrow}\mathcal{M}(U)$, where the limit extends over all open $U\subset C$ with $P\in U$. The above morphisms induce a specialization map
\[s_{P}:\mathcal{M}_{C,P}\to\mathcal{M}(P).\]
Moreover, for every closed point $P\in C$ we obtain a Trace map, which we will denote by \[\Tr_{P/k}:\mathcal{M}(P)\to\mathcal{M}(k).\]
\subsection{The modulus condition and local symbols} Let $\mathcal{M}$ be a reciprocity functor. Let $C$ be a smooth, projective and geometrically connected curve over $k\in\mathcal{E}_{F}$. The definition of a reciprocity functor imposes the existence for each section $g\in\mathcal{M}(\eta_{C})$ of a modulus $\mathfrak{m}$ corresponding to $g$. The modulus $\mathfrak{m}$ is an effective divisor $\mathfrak{m}=\sum_{P\in S}n_{P}P$ on $C$, where $S$ is a closed subset of $C$, such that $g\in\mathcal{M}_{C,P}$, for every $P\not\in S$ and for every function $f\in k(C)^{\times}$ with $\displaystyle f\in\bigcap_{P\in S} U_{C,P}^{(n_{P})}$, it holds
\[\sum_{P\in C\setminus S}\ord_{P}(f)\Tr_{P/k}(s_{P}(g))=0.\]
\begin{notation} Let $f\in k(C)^{\times}$ be such that $\displaystyle f\in\bigcap_{P\in S} U_{C,P}^{(n_{P})}$. Then we will write $f\equiv 1$ mod $\mathfrak{m}$.
\end{notation}
The modulus condition on $\mathcal{M}$ is equivalent to the existence, for each closed point $P\in C$, of a bi-additive pairing called the local symbol at $P$
\begin{eqnarray*}(.;.)_{P}:\mathcal{M}(\eta_{C})\times\mathbb{G}_{m}(\eta_{C})\to
\mathcal{M}(k)
\end{eqnarray*} which satisfies the following three characterizing properties:
\begin{enumerate}\item $(g;f)_{P}=0$, for $f\in U_{C,P}^{(n_{P})}$, where $\displaystyle\mathfrak{m}=\sum_{P\in S}n_{P}P$ is a modulus corresponding to $g$.
\item $(g;f)_{P}=\ord_{P}(f)\Tr_{P/k}(s_{P}(g))$, for all $g\in\mathcal{M}_{C,P}$ and $f\in k(C)^{\times}$.
\item $\displaystyle\sum_{P\in C}(g;f)_{P}=0$, for every $g\in \mathcal{M}(\eta_{C})$ and $f\in k(C)^{\times}$.
\end{enumerate}
The proof of existence and uniqueness of this local symbol is along the lines of Prop.1 Chapter III in \cite{Ser}.
In this paper we will use the precise definition of $(g;f)_{P}$, for $g\in\mathcal{M}(\eta_{C})$ and $f\in k(C)^{\times}$, so we review it here.\\
\underline{Case 1:} If $g\in\mathcal{M}_{C,P}$, property (2) forces us to define $(g;f)_{P}=\ord_{P}(f)\Tr_{P/k}(s_{P}(g))$. \\
\underline{Case 2:} Let $P\in S$. Using the weak approximation theorem for valuations, we consider an auxiliary function $f_{P}$ for $f$ at $P$, i.e. a function $f_{P}\in k(C)^{\times}$ such that $f_{P}\in U_{C,P'}^{(n_{P'})}$ at every $P'\in S$, $P'\neq P$ and $f/f_{P}\in U_{C,P}^{(n_{P})}$. Then we define
\[(g;f)_{P}=-\sum_{Q\not\in S}\ord_{Q}(f_{P})\Tr_{Q/k}(s_{Q}(g)).\]
\\
Using the local symbol, one can define for each closed point $P\in C$, $Fil_{P}^{0}\mathcal{M}(\eta_{C}):=\mathcal{M}_{C,P}$ and for $r\geq 1$
\[Fil_{P}^{r}\mathcal{M}(\eta_{C}):=\{g\in\mathcal{M}(\eta_{C}):(g;f)_{P}=0,\;{\rm for}\;{\rm all}\;f\in U_{C,P}^{(r)}\}.\]
Then $\{Fil_{P}^{r}\}_{r\geq 0}$ form an increasing and exhaustive filtration of $\mathcal{M}(\eta_{C})$.\\
The reciprocity functors $\mathcal{M}$ for which there exists an integer $n\geq 0$ such that it holds $\mathcal{M}(\eta_{C})=Fil_{P}^{n}\mathcal{M}(\eta_{C})$, for every smooth complete and geometrically connected curve $C$ and every closed point $P\in C$, form a full subcategory of $RF$, which is denoted by $RF_{n}$. (see def. 1.5.7. in \cite{IR}).
\vspace{2pt}
\subsection{$K$-group of Reciprocity Functors} Let $\mathcal{M}_{1},\dots,\mathcal{M}_{n}$ be reciprocity functors. The  $K$-group of reciprocity functors $T(\mathcal{M}_{1},\dots,\mathcal{M}_{n})$ is itself a reciprocity functor that satisfies various properties. (4.2.4. in \cite{IR}). We will not need the precise definition of $T(\mathcal{M}_{1},\dots,\mathcal{M}_{n})$, but only the following properties.
\begin{enumerate}[(a)] \item For $k\in\mathcal{E}_{F}$, the group
$T(\mathcal{M}_{1},\dots,\mathcal{M}_{n})(k)$ is a quotient of $(\displaystyle\mathcal{M}_{1}\bigotimes^{M}\dots\bigotimes^{M}\mathcal{M}_{n})(k)$, where by $\displaystyle\bigotimes^{M}$ we denote the product of Mackey functors (see def. \ref{Mac}).
The group $T(\mathcal{M}_{1},\dots,\mathcal{M}_{n})(k)$ is generated by elements of the form $\Tr_{k'/k}(x_{1}\otimes\dots\otimes x_{n})$, with $x_{i}\in\mathcal{M}_{i}(k')$, where $k'/k$ is any finite extension.
\item Let $C$ be a smooth, complete and geometrically connected curve over $L\in\mathcal{E}_{k}$ and let $P\in C$ be a closed point. Let $g_{i}\in \mathcal{M}_{i}(\eta_{C})$. Then
\begin{enumerate}[(i)]\item If for some $r\geq 0$ we have $g_{i}\in Fil_{P}^{r}\mathcal{M}_{i}(\eta_{C})$ for $i=1,\dots,n$, then $g_{1}\otimes\dots\otimes g_{n}\in Fil_{P}^{r}T(\mathcal{M}_{1},\dots,\mathcal{M}_{n})(\eta_{C})$. Moreover, if the element $g_{i}$ has modulus $\displaystyle\mathfrak{m}_{i}=\sum_{P\in S_{i}}n_{P}^{i}P$, for $i=1,\dots,n$, then $\displaystyle\mathfrak{m}=\sum_{P\in\cup S_{i}}\max_{1\leq i\leq n}\{n_{P}^{i}\}P$ is a modulus for $g_{1}\otimes\dots\otimes g_{n}$.
\item If $g_{i}\in Fil_{P}^{0}\mathcal{M}_{i}(\eta_{C})$, for $i=1,\dots,n$, then we have an equality \[s_{P}(g_{1}\otimes\dots\otimes g_{n})=s_{P}(g_{1})\otimes\dots\otimes s_{P}(g_{n}).\]
\end{enumerate}
\end{enumerate}
\vspace{2pt}
\subsection{Examples} Some examples of reciprocity functors include constant reciprocity functors, commutative algebraic groups, homotopy invariant Nisnevich sheaves with transfers. For an explicit description of each of these examples we refer to section 2 in \cite{IR}. The following example is of particular interest to us. \\
Let $X$ be a smooth projective variety over $k\in\mathcal{E}_{F}$. Then there is a reciprocity functor  $\underline{CH}_{0}(X)$ such that for any scheme $U\in \Reg^{\leq 1}$ over $k$ we have
\[\underline{CH}_{0}(X)(U)=CH_{0}(X\times_{k} k(U)).\] Since we assumed $X$ is projective, the degree map $CH_{0}(X)\to\Z$ induces a map of reciprocity functors
$\underline{CH}_{0}(X)\to\Z$ whose kernel will be denoted by $\underline{CH}_{0}(X)^{0}$. Both $\underline{CH}_{0}(X)$ and $\underline{CH}_{0}(X)^{0}$ are in $RF_{0}$.
\begin{rem} We note here that if $X$ has a $k$-rational point, then we have a decomposition of reciprocity functors $\underline{CH}_{0}(X)\simeq\underline{CH}_{0}(X)^{0}\oplus\Z$, where $\Z$ is the constant reciprocity functor. Moreover, if $\mathcal{M}_{1},\dots,\mathcal{M}_{r}$ are reciprocity functors, then by corollary 4.2.5. (2) in \cite{IR} we have a decomposition
\[T(\underline{CH}_{0}(X),\mathcal{M}_{1},\dots,\mathcal{M}_{r})\simeq
T(\underline{CH}_{0}(X)^{0},\mathcal{M}_{1},\dots,\mathcal{M}_{r})\oplus
T(\Z,\mathcal{M}_{1},\dots,\mathcal{M}_{r}).\]
\end{rem}
\underline{Relation to Milnor $K$-theory and K\"{a}hler differentials:} If we consider the reciprocity functor $T(\mathbb{G}_{m}^{\times n}):=T(\mathbb{G}_{m},\dots,\mathbb{G}_{m})$ attached to $n$ copies of $\mathbb{G}_{m}$, then for every $k\in\mathcal{E}_{F}$ the group  $T(\mathbb{G}_{m}^{\times n})(k)$ is isomorphic to the usual Milnor $K$-group $K_{n}^{M}(k)$ (theorem 5.3.3. in \cite{IR}). \\
Moreover, if $k$ is of characteristic zero, then the group $T(\mathbb{G}_{a},\mathbb{G}_{m}^{\times n-1})(k)$, $n\geq 1$, is isomorphic to the group of K\"{a}hler differentials $\Omega^{n-1}_{k/\Z}$ (theorem 5.4.7 in \cite{IR}).
\vspace{3pt}
\section{The homology of the complex}
\begin{conv} From now on, unless otherwise mentioned, we will be working over an algebraically closed  base field $k\in\mathcal{E}_{F}$.
\end{conv}
Let $\mathcal{M}$ be a reciprocity functor.
Let $C$ be a smooth complete curve over $k$ with generic point $\eta_{C}$. At each closed point $P\in C$ we have a local symbol $(.;.)_{P}$. We will denote by $(.;.)_{C}$ the collection of all symbols $\{(.;.)_{P}\}_{P\in C}$, namely
\[(.;.)_{C}:\mathcal{M}(\eta_{C})\otimes\mathbb{G}_{m}(\eta_{C})\to
\bigoplus_{P\in C}\mathcal{M}(k).\]
We note here that a reciprocity functor $\mathcal{M}$ is also a Mackey functor. In what follows, we will need the definition of the product of Mackey functors $\mathcal{M}_{1},\dots,\mathcal{M}_{r}$, evaluated at a finitely generated extension $L$ of $k$. We review this definition here.
\begin{defn}\label{Mac} Let $\mathcal{M}_{1},\dots,\mathcal{M}_{r}$ be Mackey functors over $k$. Let $L$ be a finitely generated extension of $k$. Then,
\[(\mathcal{M}_{1}\bigotimes^{M}\dots\bigotimes^{M}\mathcal{M}_{r})(L):=
(\bigoplus_{L'/L} \mathcal{M}_{1}(L')\bigotimes\dots\bigotimes\mathcal{M}_{r}(L'))/R,\] where the sum is extended over all finite extensions $L'$ of $L$ and $R$ is the subgroup generated by the following family of elements:\\
If $L\subset K\subset E$ is a tower of finite field extensions and we have elements $x_{i}\in\mathcal{M}_{i}(E)$ for some $i\in\{1,\dots,r\}$ and $x_{j}\in\mathcal{M}_{j}(K)$, for every $j\neq i$, then
    \[x_{1}\otimes\dots\otimes \Tr_{E/K}(x_{i})\otimes\dots\otimes x_{r}-\res_{E/K}(x_{1})\otimes\dots\otimes x_{i}\otimes\dots\otimes\res_{E/K}(x_{r})\in R\]
\end{defn} The relation in $R$ is known as the projection formula.
Using the functoriality properties of the local symbol at each closed point $P\in C$ (prop. 1.5.5. in \cite{IR}), we obtain a complex  \[(\mathcal{M}\bigotimes^{M}\mathbb{G}_{m})(\eta_{C})\stackrel{(.;.)_{C}}{
\longrightarrow}\bigoplus_{P\in C}\mathcal{M}(k)
\stackrel{\sum_{P}}{\longrightarrow}\mathcal{M}(k).\]
Namely, if $C'$ is a smooth complete curve over $k$ with function field $k(C')\supset k(C)$ and we have a section $g\in\mathcal{M}(\eta_{C'})$ and a function $f\in k(C')^{\times}$, then we define
\[(g;f)_{C}=(\sum_{\lambda(P')=P}(g;f)_{P'})_{P}\in\bigoplus_{P\in C}\mathcal{M}(k),\] where $\lambda:C'\to C$ is the finite covering induced by the inclusion $k(C)\subset k(C')$.  \\
We will denote this complex by
$(\underline{\underline{C}})$ and its homology by $H(\underline{\underline{C}})$.
We consider the reciprocity functor $\underline{CH}_{0}(C)$. Notice that the existence of a $k$-rational point $P_{0}\in C(k)$ yields a decomposition of reciprocity functors $\underline{CH}_{0}(C)\simeq\underline{CH}_{0}(C)^{0}\oplus \Z$. We make the following assumption on the $K$-group $T(\mathcal{M},\underline{CH}_{0}(C))$.
\begin{ass}\label{AS1}
Let $\mathcal{M}$ be a reciprocity functor. Let $g\in \mathcal{M}(\eta_{C})$, $h\in \underline{CH}_{0}(C)(\eta_{C})$ and $f\in k(C)^{\times}$. Let $P\in C$ be a closed point of $C$. Assume that the local symbol $(g\otimes h;f)_{P}\in T(\mathcal{M},\underline{CH}_{0}(C))(k)$ vanishes at every point $P$ such that $s_{P}(h)=0$.
\end{ass}
In the next section we will give examples where the assumption \ref{AS1} is satisfied.
\begin{prop}\label{Phi} Let $\mathcal{M}$ be a reciprocity functor over $k$ satisfying \ref{AS1}. Then there is a  well defined map
\begin{eqnarray*}\Phi:&&(\bigoplus_{P\in C}\mathcal{M}(k))/\img((.;.)_{C})\to T(\mathcal{M},\underline{CH}_{0}(C))(k)\\
&&(a_{P})_{P\in C}\to
\sum_{P\in C}a_{P}\otimes [P].
\end{eqnarray*}
\end{prop}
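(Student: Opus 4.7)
The task is to check that the displayed formula descends to the quotient, that is, that $\Phi$ vanishes on the image of $(.;.)_{C}$. A typical generator of $(\mathcal{M}\otimes^{M}\mathbb{G}_{m})(\eta_{C})$ has the form $[g,f]_{K}$ with $K=k(C')$ for a finite cover $\lambda\colon C'\to C$, and by the definition of $(.;.)_{C}$ its image under $\Phi\circ(.;.)_{C}$ equals
\[\sum_{P'\in C'}(g;f)_{P'}\otimes[\lambda(P')]\quad\text{in }T(\mathcal{M},\underline{CH}_{0}(C))(k),\]
so it suffices to show that this sum vanishes.

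My plan is to realise the sum above as the total local symbol of a suitable global section of $T(\mathcal{M},\underline{CH}_{0}(C))$ on the curve $C'$ and then appeal to the reciprocity law. To that end I would introduce the section $h_{\lambda}\in\underline{CH}_{0}(C)(\eta_{C'})=CH_{0}(C\times_{k}k(C'))$ represented by the $k(C')$-point $\Spec k(C')\xrightarrow{\eta'}C'\xrightarrow{\lambda}C$; since $\underline{CH}_{0}(C)$ belongs to $RF_{0}$, the section $h_{\lambda}$ lies in the stalk at every closed point $P'\in C'$, and a direct check yields $s_{P'}(h_{\lambda})=[\lambda(P')]$. Applying the reciprocity law of the reciprocity functor $T(\mathcal{M},\underline{CH}_{0}(C))$ on $C'$ to the pair $(g\otimes h_{\lambda},f)$ then gives
\[\sum_{P'\in C'}(g\otimes h_{\lambda};f)_{P'}=0,\]
so the whole problem reduces to the pointwise identity $(g\otimes h_{\lambda};f)_{P'}=(g;f)_{P'}\otimes[\lambda(P')]$.

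At a ``good'' point $P'$, meaning one where $g\in Fil^{0}_{P'}\mathcal{M}(\eta_{C'})$, this identity is immediate: property (b)(i) puts $g\otimes h_{\lambda}$ in $Fil^{0}_{P'}$, and characterizing property (2) of the local symbol, combined with (b)(ii) and $k(P')=k$, gives the equality on the nose. The main obstacle is the finite set of ``bad'' points $P'$ in the modulus of $g$. There I would decompose $h_{\lambda}=(h_{\lambda}-[\lambda(P')])+[\lambda(P')]$, viewing the second summand as a constant section pulled back from $\underline{CH}_{0}(C)(k)$. Assumption~\ref{AS1} applied to the first summand, whose specialization at $P'$ vanishes, kills its contribution. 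For the constant summand I would unpack Case~2 of the local symbol definition with an auxiliary function $f_{P'}$: because $[\lambda(P')]$ lies in $Fil^{r}_{P'}$ for every $r\geq 0$, property (b)(i) shows that $g\otimes[\lambda(P')]$ inherits the modulus of $g$, property (b)(ii) factors $s_{Q}(g\otimes[\lambda(P')])$ as $s_{Q}(g)\otimes[\lambda(P')]$ at every $Q\notin S$, and since $k(Q)=k$ the constant pulls out of each trace term, producing $(g;f)_{P'}\otimes[\lambda(P')]$. Summing over $P'$ and comparing with the reciprocity identity above completes the proof.
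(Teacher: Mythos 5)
Your proposal is correct and follows essentially the same route as the paper: you introduce the section $h_{\lambda}=[\eta_{C'}]\in\underline{CH}_{0}(C)(\eta_{C'})$ with $s_{P'}(h_{\lambda})=[\lambda(P')]$, reduce to the pointwise identity $(g\otimes h_{\lambda};f)_{P'}=(g;f)_{P'}\otimes[\lambda(P')]$, handle the good points by properties (2) and (b)(ii), and at the bad points use exactly the paper's decomposition $h_{\lambda}=(h_{\lambda}-[\lambda(P')])+[\lambda(P')]$, killing the first summand by Assumption \ref{AS1} and computing the constant summand via the auxiliary function. The only differences are presentational (you treat the general covering from the outset and read the bad-point computation in the opposite direction), so there is nothing to add.
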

\begin{proof}  First, we immediately observe that if $P\in C$ is any closed point of $C$, then the map $\phi_{P}:\mathcal{M}(k)\to T(\mathcal{M},\underline{CH}_{0}(C))(k)$ given by $a\to
a\otimes [P]$ is well defined. In particular, the map
\[\Phi=\sum_{P}\phi_{P}:\bigoplus_{P\in C}\mathcal{M}(k)\to T(\mathcal{M},\underline{CH}_{0}(C))(k)\] is well defined. Let $D$ be a smooth complete curve over $k$ with generic point $\eta_{D}$ and assume there is a finite covering $\lambda:D\to C$. Let $g\in \mathcal{M}(\eta_{D})$ and $f\in k(D)^{\times}$ be a function. For every closed point $P\in C$  we consider the element $\displaystyle(a_{P})_{P}\in\bigoplus_{P\in C}\mathcal{M}(k)$ such that $a_{P}=(g;f)_{P}$.
We are going to show that $\displaystyle\Phi(\sum_{P\in C}
(g;f)_{P})=0$. \\
First, we treat the case $D=C$ and $\lambda=1_{C}$.
The element $g\in \mathcal{M}(\eta_{C})$ admits a modulus $\mathfrak{m}$ with support $S$.
We consider the zero-cycle $h=[\eta_{C}]\in \underline{CH}_{0}(C)(\eta_{C})$. Notice that for a closed point $P\in C$, the specialization map $s_{P}:\underline{CH}_{0}(C)(\eta_{C})\to \underline{CH}_{0}(C)(k)$ has the property $s_{P}(h)=[P]$.
We are going to show that for every $P\in C$, it holds  \[\Phi((g;f)_{P})=(g\otimes h;f)_{P}\] and the required property will follow from the reciprocity law of the local symbol. We consider the following cases.
\begin{enumerate} \item Let $P\not\in S$. Then,  \begin{eqnarray*}\Phi((g;f)_{P})=&&
\phi_{P}(\ord_{P}(f)s_{P}(g))=
    \ord_{P}(f)s_{P}(g)\otimes[P]=\ord_{P}(f)s_{P}(g)\otimes s_{P}(h)=\\
&&\ord_{P}(f)s_{P}(g\otimes h)=
    (g\otimes h;f)_{P}.
    \end{eqnarray*}
\item Let $P\in S$ and $f\equiv 1$ mod $\mathfrak{m}$ at $P$. Since $\underline{CH}_{0}(C)\in RF_{0}$, $h$ does not contribute to the modulus, and hence, by 2.3 (b)(ii) we get:
\[\Phi((g;f)_{P})=
    \Phi(0)=0=(g\otimes h;f)_{P}.\]
\item Let now $P\in S$ and $f\in K^{\times}$ be any function. We consider an auxiliary function $f_{P}$ for $f$ at $P$. By the definition of the local symbol, we have:
    \begin{eqnarray*}\Phi((g;f)_{P})=&&
    \phi_{P}(-\sum_{Q\not\in S}\ord_{Q}(f_{P})
    s_{Q}(g))=-\sum_{Q\not\in S}\ord_{Q}(f_{P})
s_{Q}(g)\otimes[P]=\\
&&-\sum_{Q\not\in S}\ord_{Q}(f_{P})
s_{Q}(g)\otimes[Q]+\sum_{Q\not\in S}\ord_{Q}(f_{P})
s_{Q}(g)\otimes([Q]-[P]).
\end{eqnarray*} We observe that we have an equality
\[(g\otimes h;f)_{P}=-\sum_{Q\not\in S}\ord_{Q}(f_{P})s_{Q}(g)\otimes [Q].\] Next, notice that the flat embedding $k\hookrightarrow k(C)$ induces a restriction map $\res_{\eta/k}:CH_{0}(C)\to CH_{0}(C\times\eta_{C})$. Let $h_{0}=\res_{\eta/k}([P])$. Then we clearly have
\begin{eqnarray*}&&\sum_{Q\not\in S}\ord_{Q}(f_{P})s_{Q}(g)\otimes([P]-[Q])=(g\otimes(h_{0}-h);f)_{P}.
\end{eqnarray*}
Since we assumed that the assumption \ref{AS1} is satisfied, we get that this last symbol vanishes. For,  $s_{P}(h-h_{0})=0$.
\end{enumerate}
The general case is treated in a similar way. Namely, if $\lambda:D\to C$ is a finite covering of smooth complete curves over $k$ and $g\in\mathcal{M}(\eta_{D})$, then the local symbol at a closed point $P\in C$ is defined to be $(g;f)_{P}=\sum_{\lambda(Q)=P}(g;f)_{Q}$. Considering the zero cycle $h=[\eta_{D}]\in\underline{CH}_{0}(C)(\eta_{D})$, we can show that \[\Phi_{P}((g;f)_{P})=
(g\otimes h;f)_{P}.\]

\end{proof}
\medskip
From now on we fix a $k$-rational point $P_{0}$ of $C$. We obtain the following corollary.
\begin{cor} The map $\Phi$ of proposition \ref{Phi} induces a map
\begin{eqnarray*}\Phi:&&H(\underline{\underline{C}})\to T(\mathcal{M},\underline{CH}_{0}(C)^{0})(k)\\
&&(a_{P})_{P\in C}\to
\sum_{P\in C}a\otimes ([P]-[P_{0}]).
\end{eqnarray*} which does not depend on the $k$-rational point $P_{0}$.
\end{cor}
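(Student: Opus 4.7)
The proof should be a short and essentially formal consequence of Proposition \ref{Phi} together with the decomposition of reciprocity functors coming from the $k$-rational point $P_{0}$. Here is how I would organize it.

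First, I would invoke the Remark stating that the choice of $P_{0}\in C(k)$ induces a splitting $\underline{CH}_{0}(C)\simeq \underline{CH}_{0}(C)^{0}\oplus\Z$, where the $\Z$-summand is generated by $[P_{0}]$. By corollary 4.2.5.(2) of \cite{IR} this splitting is compatible with the $K$-group construction, so it yields a direct sum decomposition
\[
T(\mathcal{M},\underline{CH}_{0}(C))(k)\;\simeq\; T(\mathcal{M},\underline{CH}_{0}(C)^{0})(k)\,\oplus\, T(\mathcal{M},\Z)(k),
\]
and the second summand is canonically identified with $\mathcal{M}(k)$. Under the projection onto the first factor, the class $a\otimes [P]$ is sent to $a\otimes([P]-[P_{0}])$, because $[P]=([P]-[P_{0}])+[P_{0}]$ splits along the decomposition.

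Second, I would compose the map $\Phi$ of Proposition \ref{Phi} with this projection, obtaining a well-defined homomorphism
\[
\tilde\Phi:(\bigoplus_{P\in C}\mathcal{M}(k))/\img((.;.)_{C})\longrightarrow T(\mathcal{M},\underline{CH}_{0}(C)^{0})(k),\qquad (a_{P})_{P}\longmapsto\sum_{P\in C}a_{P}\otimes([P]-[P_{0}]).
\]
Restricting to the subgroup $\ker(\sum_{P})/\img((.;.)_{C})=H(\underline{\underline{C}})$ gives the desired map on homology; no further verification is needed here since $\tilde\Phi$ is already well-defined on the larger quotient by Proposition \ref{Phi}.

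Finally, I would verify independence from $P_{0}$. If $P_{0}'\in C(k)$ is another rational point and we denote by $\tilde\Phi'$ the map constructed from $P_{0}'$, then for $(a_{P})_{P}\in\ker(\sum_{P})$ the difference is
\[
\tilde\Phi((a_{P}))-\tilde\Phi'((a_{P}))=\sum_{P\in C}a_{P}\otimes([P_{0}']-[P_{0}])=\Bigl(\sum_{P\in C}a_{P}\Bigr)\otimes([P_{0}']-[P_{0}])=0,
\]
using the bi-additivity of the Mackey product and the cycle relation defining the kernel. There is no genuine obstacle in this corollary; the only mildly subtle point is to make sure the projection formula above (identifying $[P]$ with $([P]-[P_{0}])+[P_{0}]$ under the splitting) is used correctly, but this is immediate from the definition of the decomposition in the Remark.
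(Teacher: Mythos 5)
Your proposal is correct and follows essentially the same route as the paper: the splitting $T(\mathcal{M},\underline{CH}_{0}(C))\simeq T(\mathcal{M},\underline{CH}_{0}(C)^{0})\oplus T(\mathcal{M},\Z)$ induced by $P_{0}$, together with the observation that $\sum_{P}a_{P}=0$ forces $\sum_{P}a_{P}\otimes[P_{0}]=0$, so the image lands in $T(\mathcal{M},\underline{CH}_{0}(C)^{0})(k)$ and is independent of $P_{0}$. The paper's version is just a terser phrasing of the same argument, working directly on $\ker(\sum_{P})$ rather than first projecting on the larger quotient.
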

\begin{proof} If $(a_{P})_{P\in C}\in H(\underline{\underline{C}})$, then $\sum_{P}a_{P}=0\in\mathcal{M}(k)$ and hence $\sum_{P}a_{P}\otimes [P_{0}]=0\in T(\mathcal{M},\underline{CH}_{0}(C))(k)$. We conclude that if $(a_{P})_{P\in C}\in H(\underline{\underline{C}})$ then $\Phi((a_{P})_{P\in C})\in T(\mathcal{M},\underline{CH}_{0}(C)^{0})(k)$ and clearly the map does not depend on the $k$-rational point $P_{0}$.

\end{proof}
\medskip
\begin{defn} Let $\mathcal{M}_{1},\dots,\mathcal{M}_{r}$ be reciprocity functors over $k$. We consider the geometric $K$-group attached to $\mathcal{M}_{1},\dots,\mathcal{M}_{r}$, \[K^{geo}(k;\mathcal{M}_{1},\dots,\mathcal{M}_{r})=
(\mathcal{M}_{1}\bigotimes^{M}\dots\bigotimes^{M}\mathcal{M}_{r})(k)/R,\] where the subgroup $R$ is generated by the following family of elements:\\
Let $D$ be a smooth complete curve over $k$ with generic point $\eta_{D}$. Let $g_{i}\in\mathcal{M}_{i}(\eta_{D})$. Then each $g_{i}$ admits a modulus $\mathfrak{m}_{i}$. Let
    $\displaystyle\mathfrak{m}=\sup_{1\leq i\leq r}\mathfrak{m}_{i}$ and $S$ be the support of $\mathfrak{m}$. Let $f\in k(D)^{\times}$ be a function such that $f\equiv 1$ mod $\mathfrak{m}$. Then
    \[\sum_{P\not\in S}\ord_{P}(f)s_{P}(g_{1})\otimes\dots\otimes s_{P}(g_{r})\in R.\]
\end{defn}
\begin{notation} The elements of the geometric $K$-group $K^{geo}(k;\mathcal{M}_{1},\dots,\mathcal{M}_{r})$ will be denoted as $\{x_{1}\otimes\dots\otimes x_{r}\}^{geo}$.
\end{notation}
\begin{rem} In the notation of \cite{IR} the group $K^{geo}(k;\mathcal{M}_{1},\dots,\mathcal{M}_{r})$ is the same as the Lax Mackey functor $LT(\mathcal{M}_{1},\dots,\mathcal{M}_{r})$ evaluated at $\Spec k$. (def. 3.1.2. in \cite{IR}). In general the group $T(\mathcal{M}_{1},\dots,\mathcal{M}_{r})(k)$ is a quotient of $K^{geo}(k;\mathcal{M}_{1},\dots,\mathcal{M}_{r})$. In the next section we give some examples where these two groups coincide.
\end{rem}
\begin{prop}\label{Psi} Let $P_{0}$ be a fixed $k$-rational point of $C$. The map \begin{eqnarray*}\Psi:&&K^{geo}(k;\mathcal{M},
\underline{CH}_{0}(C)^{0})
\longrightarrow H(\underline{\underline{C}})\\
&&\{x\otimes([P]-[P_{0}])\}^{geo}\longrightarrow(x_{P'})_{P'\in C},
\end{eqnarray*} with $x_{P'}=
\left\{
\begin{array}{ll}
 x, \; P'=P \\
 -x, \; P'=P_{0}\\
 0,\; \rm{otherwise},
 \end{array}
 \right.$ for $P\neq P_{0}$, is well defined and does not depend on the choice of the $k$-rational point $P_{0}$.
\end{prop}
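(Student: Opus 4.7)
The plan is to construct $\Psi$ canonically out of the Mackey product $(\mathcal{M}\otimes^{M}\underline{CH}_{0}(C)^{0})(k)$, which collapses to the naive tensor product $\mathcal{M}(k)\otimes_{\Z}CH_{0}(C)^{0}$ since $k$ is algebraically closed and admits no non-trivial finite extensions. Once $\Psi$ is built with no reference to $P_{0}$, independence of $P_{0}$ is automatic: the stated formula simply evaluates $\Psi$ on the generating family $\{x\otimes([P]-[P_{0}])\}_{P\in C(k)}$ indexed by the chosen base point.

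First I would define a preliminary map $\tilde\Psi:\mathcal{M}(k)\otimes\Z[C(k)]\to\bigoplus_{P\in C}\mathcal{M}(k)$ sending $x\otimes[P]$ to the element whose $P$-component is $x$ and whose other components vanish. On the sub-tensor product with $\Z[C(k)]^{0}$, the image of $\tilde\Psi$ lands in $\ker(\sum_{P})$. To descend from $\Z[C(k)]^{0}$ to $CH_{0}(C)^{0}$ modulo $\img((.;.)_{C})$, I must show that the image of a principal divisor $x\otimes\dv(f)$ lies in $\img((.;.)_{C})$. Viewing $x\in\mathcal{M}(k)$ as a constant section in $\mathcal{M}(\eta_{C})$ via pullback, the local symbol computes as $(x;f)_{P}=\ord_{P}(f)\Tr_{P/k}(s_{P}(x))=\ord_{P}(f)\cdot x$ at every closed point $P\in C$, using that $s_{P}(x)=x$ and $\Tr_{P/k}=\mathrm{id}$ since $k$ is algebraically closed. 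Hence $\tilde\Psi(x\otimes\dv(f))=(x;f)_{C}\in\img((.;.)_{C})$, yielding a well-defined map $\mathcal{M}(k)\otimes CH_{0}(C)^{0}\to H(\underline{\underline{C}})$.

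Next I verify that the geometric relations defining $K^{geo}$ are killed. Let $D$ be a smooth complete curve over $k$, let $g_{1}\in\mathcal{M}(\eta_{D})$ have modulus $\mathfrak{m}_{1}$ supported on $S$, let $g_{2}\in\underline{CH}_{0}(C)^{0}(\eta_{D})$ (with trivial modulus since $\underline{CH}_{0}(C)^{0}\in RF_{0}$), and let $f\in k(D)^{\times}$ satisfy $f\equiv 1\bmod\mathfrak{m}_{1}$. I must show that
\[
\omega=\sum_{P\notin S}\ord_{P}(f)\,s_{P}(g_{1})\otimes s_{P}(g_{2})
\]
maps to zero in $H(\underline{\underline{C}})$. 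Reducing to the case where $g_{2}$ is represented by a single morphism $\varphi:D\to C$ with $s_{P}(g_{2})=[\varphi(P)]-[P_{0}]$, I compute $\Psi(\omega)$ componentwise. The $P_{0}$-component equals $-\sum_{P\notin S}\ord_{P}(f)s_{P}(g_{1})$, which vanishes by the reciprocity law $\sum_{P\in D}(g_{1};f)_{P}=0$ on $D$ (the $P\in S$ terms being zero since $f\equiv 1\bmod\mathfrak{m}_{1}$). For $Q\neq P_{0}$ the $Q$-component equals $\sum_{\varphi(P)=Q,\,P\notin S}\ord_{P}(f)s_{P}(g_{1})=\sum_{\varphi(P)=Q}(g_{1};f)_{P}$, precisely the $Q$-component of $(g_{1};f)_{C}$ viewed through the finite covering $\varphi$; when $\varphi$ is constant the same sum vanishes by reciprocity. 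Thus $\Psi(\omega)=(g_{1};f)_{C}\in\img((.;.)_{C})$, so $\omega\mapsto 0$.

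The main obstacle is the reduction, assumed above, of a general $g_{2}\in\underline{CH}_{0}(C)^{0}(\eta_{D})\simeq J(k(D))$ (where $J$ is the Jacobian of $C$) to a $\Z$-linear combination of sections represented by single morphisms $\varphi:D\to C$. In general $g_{2}$ is a finite sum of classes of closed points $p_{i}\in C_{k(D)}$, each corresponding to a morphism $\varphi_{i}:\tilde D_{i}\to C$ from a finite cover $\pi_{i}:\tilde D_{i}\to D$. Handling this requires the projection formula in the Mackey product to absorb the traces of $\pi_{i}$, combined with the functoriality of the local symbol under finite pushforward (built into the very definition of $(.;.)_{C}$) and the correct transfer of moduli and reciprocity sums under the pullback $\pi_{i}^{*}$.
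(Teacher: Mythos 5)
Your overall architecture matches the paper's: the same well-definedness check for principal divisors (pull $x$ back to a constant section with modulus $0$, so $(x;f)_{P}=\ord_{P}(f)x$ and $\tilde\Psi(x\otimes\dv(f))=(x;f)_{C}$), the same observation that independence of $P_{0}$ is a telescoping formality, and the same strategy of evaluating $\Psi$ on a geometric relation and recognizing the output as an element of $\img((.;.)_{C})$. Your computation in the case where $g_{2}$ is represented by a single morphism $\varphi:D\to C$ is correct, including the use of reciprocity on $D$ to control the $P_{0}$-coordinate.

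However, the step you defer in your final paragraph is not a technicality to be absorbed by the projection formula; it is where most of the actual work in the paper's proof lives, so as written there is a genuine gap. A general generator of $\underline{CH}_{0}(C)^{0}(\eta_{D})$ has the form $[h]-m[\res_{k(D)/k}(P_{0})]$ where $h$ is a closed point of $C\times k(D)$ whose residue field $k(E)$ is a degree-$m$ extension of $k(D)$; such an $h$ does \emph{not} decompose as a $\Z$-linear combination of morphisms $D\to C$. The paper instead passes to the smooth complete curve $E$ with its two coverings $\lambda:E\to D$ and $\mu:E\to C$, and uses the explicit specialization formula $s_{R}([h])=\sum_{\lambda(Q)=R}e(Q/R)[\mu(Q)]$ together with $m=\sum_{\lambda(Q)=R}e(Q/R)$ to split the term at each $R\notin S_{D}$ into contributions indexed by the fiber $\lambda^{-1}(R)$. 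The whole relation is then rewritten as a sum over $Q\notin\lambda^{-1}(S_{D})$ by means of the identities $e(Q/R)\ord_{R}(f)=\ord_{Q}(\lambda^{\star}f)$ and $s_{R}(g_{1})=s_{Q}(\lambda^{\star}g_{1})$ (the latter needs $k$ algebraically closed, via prop. 1.3.7($\mathcal{S}_{2}$) of \cite{IR}), and the result is recognized as $(\lambda^{\star}g_{1};\lambda^{\star}f)_{C}$ computed on the covering $\mu:E\to C$. That this lies in $\img((.;.)_{C})$ is then immediate, because the first map of the complex is by definition taken over all coverings of $C$, not just over $C$ itself — a point your sketch does not invoke. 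So the missing content is the fiberwise specialization identity for the zero-cycle $[h]$ and the change of variables along $\lambda$, rather than the Mackey projection formula you propose to use.
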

\begin{proof} We start by defining the map $\Psi_{P_{0}}:\mathcal{M}(k)\otimes \underline{CH}_{0}(C)^{0}(k)\to H(\underline{\underline{C}})$ as in the statement of the proposition. To see that $\Psi_{P_{0}}$ is well defined, let $f\in k(C)^{\times}$. We need to verify that for every $x\in\mathcal{M}(k)$ it holds $\Psi_{P_{0}}(x\otimes\dv(f))=0$. Let $\pi:C\to\Spec k$ be the structure map. Consider the pull back $g=\pi^{\star}(x)\in\mathcal{M}(C)$. Then $g\in\mathcal{M}(\eta_{C})$ has modulus $\mathfrak{m}=0$ and hence for a closed point $P\in C$ we have
$(g,f)_{P}=\ord_{P}(f)s_{P}(\pi^{\star}(x))=\ord_{P}(f)x$. Since $\Psi_{P_{0}}(x\otimes\dv(f))=(\ord_{P}(f)x)_{P\in C}$, we conclude that $\Psi_{P_{0}}(x\otimes\dv(f))\in\img(.,.)_{C}$. \\
Next, notice that $\Psi_{P_{0}}$ does not depend on the base point $P_{0}$. For, if $Q_{0}$ is another base point, then
\begin{eqnarray*}&&\Psi_{Q_{0}}(\{x\otimes([P]-[P_{0}])\}^{geo})=\\
&&\Psi_{Q_{0}}(\{x\otimes([P]-[Q_{0}])\}^{geo})-\Psi_{Q_{0}}(\{x\otimes([P_{0}]-[Q_{0}])
\}^{geo}).
\end{eqnarray*} $\Psi_{Q_{0}}(\{x\otimes([P]-[Q_{0}])\}^{geo})$ gives the element
$x$ at the coordinate $P$ and $-x$ at the coordinate $Q_{0}$, while $-\Psi_{Q_{0}}(\{x\otimes([P_{0}]-[Q_{0}])\}^{geo})$ gives $-x$ at coordinate $P_{0}$ and $x$ at $Q_{0}$. From now on we will denote this map by $\Psi$.
In order to show that $\Psi$ factors through $K^{geo}(k;\mathcal{M},\underline{CH}_{0}(C)^{0})$, we consider a smooth complete complete curve $D$ with generic point $\eta_{D}$. Let $g_{1}\in \mathcal{M}(\eta_{D})$ admitting a modulus $\mathfrak{m}$ with support $S_{D}$ and $g_{2}\in\underline{CH}_{0}(C)^{0}(\eta_{D})$ having modulus $\mathfrak{m}_{2}=0$. Let moreover $f\in k(D)^{\times}$ be a function such that $f\equiv 1$ mod $\mathfrak{m}$. We need to show that
\[\Psi(\sum_{R\not\in S_{D}}\ord_{R}(f)\{s_{R}(g_{1})\otimes s_{R}(g_{2})\}^{geo})=0\in H(\underline{\underline{C}}).\]
Since we assumed the existence of a $k$-rational point $P_{0}$, the group $\underline{CH}_{0}(C)^{0}(\eta_{D})$ is generated by elements of the form $[h]-m[\res_{k(D)/k}(P_{0})]$, where $h$ is a closed point of $C\times k(D)$ having residue field  of degree $m$ over $k(D)$. Using the linearity of the symbol on the last coordinate, we may reduce to the case when $g_{2}$ is of the above form. Notice that $h=\Spec k(E)\hookrightarrow C\times\Spec k(D)$, where $E$ is a smooth complete curve over $k$, and hence $h$ induces two coverings
\[ \xymatrix{
& E\ar[d]_{\mu} \ar[r]^{\lambda}   &   D\\
&  C. \\
}
\]
Let $S_{E}=\lambda^{-1}(S_{D})$.
For a closed point $R\in D$, we obtain an equality:
\begin{eqnarray*}s_{R}([h])=\sum_{\lambda(Q)=R}e(Q/R)[\mu(Q)],
\end{eqnarray*}where $e(Q/R)$ is the ramification index at the point $Q\in E$ lying over $R\in D$. Since $\displaystyle m=[k(E):k(D)]=\sum_{\lambda(Q)=R}e(Q/R)$, we get
\begin{eqnarray*}&&\Psi(
\sum_{R\not\in S_{D}}\ord_{R}(f)\{s_{R}(g_{1})\otimes s_{R}(g_{2})\}^{geo})=\\&&\Psi(\sum_{R\not\in S_{D}}\ord_{R}(f)
\{s_{R}(g_{1})\otimes (\sum_{\lambda(Q)=R}e(Q/R)
[\mu(Q)]-m[P_{0}])\}^{geo})=\\
&&\Psi(\sum_{R\not\in S_{D}}\sum_{\lambda(Q)=R}e(Q/R)\ord_{R}(f)\{s_{R}(g_{1})\otimes ([\mu(Q)]-[P_{0}])\}^{geo})=\\
&&\Psi(\sum_{Q\not\in S_{E}}\ord_{Q}(\lambda^{\star}(f))\{s_{Q}(\lambda^{\star}(g_{1}))
\otimes([\mu(Q)]-[P_{0}])\}^{geo}).
\end{eqnarray*} In the above computation we used the fact that for a closed point $Q\in E$ lying over $R\in D$, it holds $s_{R}(g_{1})=s_{Q}(\lambda^{\star}(g_{1}))$ (this equality follows from prop. 1.3.7.($\mathcal{S}_{2}$) in \cite{IR} and the assumption that the base field $k$ is algebraically closed). \\
We conclude that
\begin{eqnarray*}&&\Psi(\sum_{Q\not\in S_{E}}\ord_{Q}(\lambda^{\star}(f))\{s_{Q}(\lambda^{\star}(g_{1}))
\otimes([\mu(Q)]-[P_{0}])\}_{Q/k})=\\
&&\left\{
    \begin{array}{ll}
      \sum_{\mu(Q)=P}\ord_{Q}(\lambda^{\star}(f))s_{Q}(\lambda^{\star}(g_{1})),\Ac P\neq P_{0}  \\
      -\sum_{P\neq P_{0}}\sum_{\mu(Q)=P}\ord_{Q}(\lambda^{\star}(f))s_{Q}(\lambda^{\star}(g_{1}))
,\Ac P_{0}
    \end{array}
  \right.
\end{eqnarray*}
This last computation completes the argument, after we notice that the reciprocity of the local symbol yields an equality
\[-\sum_{P\neq P_{0}}\sum_{\mu(Q)=P}\ord_{Q}(\lambda^{\star}(f))s_{Q}(\lambda^{\star}(g_{1}))
=(\lambda^{\star}(g_{1});\lambda^{\star}(f))_{P_{0}}.
\]

\end{proof}
\medskip
We make the following assumption on $T(\mathcal{M},\underline{CH}_{0}(C)^{0})$.
\begin{ass}\label{AS2} Let $\mathcal{M}$ be a reciprocity functor. Assume that the $K$-group $T(\mathcal{M},\underline{CH}_{0}(C)^{0})(k)$ coincides with the geometric $K$-group $K^{geo}(k;\mathcal{M},\underline{CH}_{0}(C)^{0})$.
\end{ass}
\begin{thm}\label{iso} Let $\mathcal{M}$ be a reciprocity functor such that the group $T(\mathcal{M},\underline{CH}_{0}(C)^{0})(k)$ satisfies both assumptions \ref{AS1} and \ref{AS2}. Then we have an isomorphism $H(\underline{\underline{C}})\simeq T(\mathcal{M},\underline{CH}_{0}(C)^{0})(k)$.
\end{thm}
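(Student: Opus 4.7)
The plan is to show that the maps $\Phi$ from the corollary following Proposition \ref{Phi} and $\Psi$ from Proposition \ref{Psi} are mutually inverse. Note that Assumption \ref{AS2} identifies the source of $\Psi$ with the target of $\Phi$, so once the two compositions are computed, the theorem follows immediately.

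First I would verify $\Phi\circ\Psi=\mathrm{id}$. A generating element of $K^{geo}(k;\mathcal{M},\underline{CH}_{0}(C)^{0})$ has the form $\{x\otimes([P]-[P_{0}])\}^{geo}$ with $P\neq P_{0}$ and $x\in\mathcal{M}(k)$. By the formula in Proposition \ref{Psi}, $\Psi$ sends this element to the class of the tuple $(x_{P'})_{P'\in C}$ with $x_{P}=x$, $x_{P_{0}}=-x$ and all other coordinates zero; applying $\Phi$ gives $x\otimes[P]-x\otimes[P_{0}]=x\otimes([P]-[P_{0}])$, which is the original element in $T(\mathcal{M},\underline{CH}_{0}(C)^{0})(k)$. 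Since these elements generate the geometric $K$-group, this shows $\Phi\circ\Psi=\mathrm{id}$.

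Next I would verify $\Psi\circ\Phi=\mathrm{id}$ on $H(\underline{\underline{C}})$. Let $(a_{P})_{P\in C}$ be a cycle, so $\sum_{P}a_{P}=0$ in $\mathcal{M}(k)$. Then
\[
\Phi((a_{P})_{P\in C})=\sum_{P\in C}a_{P}\otimes([P]-[P_{0}]).
\]
Applying $\Psi$ coordinate by coordinate (and using linearity), one gets back the tuple which at each $P\neq P_{0}$ has entry $a_{P}$ and at $P_{0}$ has entry $-\sum_{P\neq P_{0}}a_{P}$. The reciprocity relation $\sum_{P}a_{P}=0$ forces the $P_{0}$-entry to equal $a_{P_{0}}$, so we recover the original tuple. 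Thus $\Psi\circ\Phi=\mathrm{id}$.

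I do not expect any real obstacle here: all the substantive content has already been absorbed into Propositions \ref{Phi} and \ref{Psi}, whose well-definedness used Assumption \ref{AS1} and the reciprocity law, respectively. Assumption \ref{AS2} is used only to reinterpret $\Psi$ as a map out of $T(\mathcal{M},\underline{CH}_{0}(C)^{0})(k)$ so that it can be composed with $\Phi$ in both directions. The mild bookkeeping subtlety is that the formula for $\Psi$ requires the special form $\{x\otimes([P]-[P_{0}])\}^{geo}$; since the group $\underline{CH}_{0}(C)^{0}(k)$ is generated by degree-zero differences of closed points and we have fixed the base point $P_{0}\in C(k)$, writing an arbitrary element of the geometric $K$-group as a sum of such generators is straightforward.
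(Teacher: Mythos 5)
Your proposal is correct and follows essentially the same route as the paper: both verify on generators $\{x\otimes([P]-[P_{0}])\}^{geo}$ that $\Phi\circ\Psi=\mathrm{id}$, and both use the cycle condition $\sum_{P}a_{P}=0$ to identify the $P_{0}$-coordinate and conclude $\Psi\circ\Phi=\mathrm{id}$, with Assumption \ref{AS2} serving only to identify $K^{geo}(k;\mathcal{M},\underline{CH}_{0}(C)^{0})$ with $T(\mathcal{M},\underline{CH}_{0}(C)^{0})(k)$. No gaps.
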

\begin{proof} By proposition \ref{Psi} we obtain a homomorphism
\[\Psi:T(\mathcal{M},\underline{CH}_{0}(C)^{0})(k)\to H(\underline{\underline{C}}).\] It is almost a tautology to check that $\Psi$ is the inverse of $\Phi$. Namely,
\begin{eqnarray*}\Phi\Psi(x\otimes([P]-[P_{0}]))=\Phi((x_{P'})_{P'})=
\sum_{P'}x_{P'}\otimes [P']=x\otimes[P]-x\otimes[P_{0}],
\end{eqnarray*} and
\begin{eqnarray*}\Psi\Phi((x_{P})_{P})=\Psi(\sum_{P\in C} x_{P}\otimes([P]-[P_{0}]))=(x_{P})_{P}.
\end{eqnarray*}
Notice that for the last equality, we used the fact that $(x_{P})_{P\in C}\in\ker(\sum_{P\in C})$, and hence at coordinate $P_{0}$ we have $\displaystyle x_{P_{0}}=-\sum_{P\neq P_{0}}x_{P}$.

\end{proof}

\vspace{7pt}
\section{Examples}
In this section we give some examples of reciprocity functors $\mathcal{M}$ such that the $K$-group of reciprocity functors $T(\mathcal{M},\underline{CH}_{0}(C)^{0})$ satisfies the assumptions \ref{AS1} and \ref{AS2}.
\subsection{Homotopy Invariant Nisnevich sheaves with Transfers} We consider the category $\HI_{\Nis}$ of homotopy invariant Nisnevich sheaves with transfers over a perfect field $F$. Let $\mathcal{F}_{1},\dots,\mathcal{F}_{r}\in \HI_{\Nis}$. Then each $\mathcal{F}_{i}$ induces a reciprocity functor $\hat{\mathcal{F}}_{i}\in RF_{1}$ (see example 2.3 in \cite{IR}). Moreover, the associated $K$-group of reciprocity functors $T(\hat{\mathcal{F}}_{1},\dots,\hat{\mathcal{F}}_{r})$ is also in $RF_{1}$. We claim that $T(T(\hat{\mathcal{F}}_{1},\dots,\hat{\mathcal{F}}_{r}),\underline{CH}_{0}(C)^{0})$ satisfies both assumptions of theorem \ref{iso}. The claim follows by the comparison of the $K$-group  $T(T(\hat{\mathcal{F}}_{1},\dots,\hat{\mathcal{F}}_{r}),
\underline{CH}_{0}(C)^{0})(k)$ with the Somekawa type $K$-group $K(k;\mathcal{F}_{1}\dots,\mathcal{F}_{r},\underline{CH}_{0}(C)^{0})$ defined by B.Kahn and T.Yamazaki in \cite{KY} (def.5.1). \\
\begin{rem}\label{prod} If $\mathcal{M}_{1},\dots,\mathcal{M}_{r}$ are reciprocity functors with $r\geq 3$, then F. Ivorra and K. R\"{u}lling in corollary 4.2.5. of \cite{IR} prove that there is a functorial map
\[T(\mathcal{M}_{1},\dots,\mathcal{M}_{r})\to T(T(\mathcal{M}_{1},\dots,\mathcal{M}_{r-1}),\mathcal{M}_{r})\] which is surjective as a map of Nisnevich sheaves. It is not clear whether this map is always an isomorphism which would imply that $T$ is associative and we would call it a product. In the case $\mathcal{F}_{i}\in\HI_{\Nis}$, for every $i\in\{1,\dots,r\}$, associativity holds. In fact, in this case there is an isomorphism of reciprocity functors
\[T(\hat{\mathcal{F}}_{1},\dots,\hat{\mathcal{F}}_{r})\simeq(
\hat{\mathcal{F}_{1}\bigotimes_{\HI_{\Nis}}\dots
\bigotimes_{\HI_{\Nis}}\dots\mathcal{F}_{r}}),\] where $\mathcal{F}_{1}\bigotimes_{\HI_{\Nis}}\dots
\bigotimes_{\HI_{\Nis}}\mathcal{F}_{r}$ is the product of homotopy invariant Nisnevich sheaves with transfers. (see 2.10 in \cite{KY} for the definition of the product and theorem 5.1.8 in \cite{IR} for the isomorphism).
\end{rem}
\begin{notn} By abuse of notation from now on we will write $T(\mathcal{F}_{1},\dots,\mathcal{F}_{r})$ for the $K$-group of reciprocity functors associated to $\hat{\mathcal{F}}_{1},\dots,\hat{\mathcal{F}}_{r}$.
\end{notn}
\medskip
\begin{rem}\label{curvelike} Let $\NST$ be the category of Nisnevich sheaves with transfers. We note here that there is a left adjoint to the inclusion functor $\NST\to \HI_{\Nis}$ which is denoted by $h_{0}^{\Nis}$ (see section 2 in \cite{KY}).
If $U$ is a smooth curve over $F$, then there is
a Nisnevich sheaf with transfers $L(U)$, where $L(U)(V)=\Cor(V,U)$ is the group of finite correspondences for $V$ smooth over $F$, i.e. the free abelian group on the set of closed integral subschemes of $V\times U$ which are finite and surjective over some irreducible component of $V$. Then the corresponding homotopy invariant Nisnevich sheaf with transfers $h_{0}^{\Nis}(U):=h_{0}^{\Nis}(L(U))$ is the sheaf associated to the presheaf of relative Picard groups
\[V\to\Pic(\overline{U}\times V,D\times V),\] where $\overline{U}$ is the smooth compactification of $U$, $D=\overline{U}\setminus U$ and $V$ runs through smooth $F$-schemes. When $U$ is projective we have an isomorphism  $h_{0}^{\Nis}(U)\simeq\underline{CH}_{0}(U)$ (see lemma 11.2 in \cite{KY}). In particular, $\underline{CH}_{0}(C)$ is homotopy invariant Nisnevich sheaf with trasnfers. \\
Let $\mathcal{F}\in\HI_{\Nis}$. If we are given a section $g\in\mathcal{F}(U)$ for some open dense $U\subset C$, then $g$ induces a map of Nisnevich sheaves with transfers
$\varphi:h_{0}^{\Nis}(U)\to\mathcal{F}$ such that
\begin{eqnarray*}\varphi(U):&&h_{0}^{\Nis}(U)(U)\to\mathcal{F}(U)\\
&&[\Delta]\to g,
\end{eqnarray*} where $[\Delta]\in h_{0}^{\Nis}(U)(U)$ is the class of the diagonal. The existence of the map $\varphi$ follows by adjunction, since we have  an obvious morphism
$L(U)\to\mathcal{F}$  in $\NST$.
\end{rem}
\begin{lem}\label{HI} Let $\mathcal{F}_{1},\dots,\mathcal{F}_{r}\in\HI_{\Nis}$ be homotopy invariant sheaves with transfers. Then the $K$-group of reciprocity functors $T(T(\mathcal{F}_{1},\dots,\mathcal{F}_{r}),\underline{CH}_{0}(C)^{0})$ satisfies the assumptions of theorem \ref{iso}.
\end{lem}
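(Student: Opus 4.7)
My plan is to identify the K-group $T(T(\mathcal{F}_1,\ldots,\mathcal{F}_r),\underline{CH}_0(C)^0)(\Spec k)$ with the Somekawa K-group of Kahn--Yamazaki \cite{KY}, and derive both assumptions from its explicit presentation.

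First, since $\underline{CH}_0(C)\in\HI_{\Nis}$ by Remark \ref{curvelike}, Remark \ref{prod} yields an isomorphism of reciprocity functors
\[T(T(\mathcal{F}_1,\ldots,\mathcal{F}_r),\underline{CH}_0(C)^0)\simeq T(\mathcal{F}_1,\ldots,\mathcal{F}_r,\underline{CH}_0(C)^0),\]
and Theorem 5.1.8 of \cite{IR} combined with Theorem 11.14 of \cite{KY} then identifies the value at $\Spec k$ with the Somekawa-type K-group $K(k;\mathcal{F}_1,\ldots,\mathcal{F}_r,\underline{CH}_0(C)^0)$.

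For Assumption \ref{AS2}, I would compare the defining relations of $K^{geo}(k;\mathcal{M},\underline{CH}_0(C)^0)$ with those of the Kahn--Yamazaki K-group. Both are quotients of the same Mackey tensor product by Weil reciprocity relations indexed by smooth complete curves over $k$. Since every section of an HI sheaf over the generic point $\eta_D$ extends to some open $U\subset D$, the "modulus condition" defining $K^{geo}$ reduces to the "extension condition" appearing in the definition of Kahn--Yamazaki. A direct unwinding then shows that the two families of relations coincide, giving the desired equality $T(\mathcal{M},\underline{CH}_0(C)^0)(k)=K^{geo}(k;\mathcal{M},\underline{CH}_0(C)^0)$.

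For Assumption \ref{AS1}, let $g,h,f,P$ be as in the statement. If $g\in Fil_P^0\mathcal{M}(\eta_C)$, property (b)(i) of section 2.3 gives $g\otimes h\in Fil_P^0$, and (b)(ii) gives $s_P(g\otimes h)=s_P(g)\otimes s_P(h)=0$; hence $(g\otimes h;f)_P=\ord_P(f)\Tr_{P/k}(s_P(g\otimes h))=0$, as required. The main obstacle is the opposite case, when $P$ lies in the defect locus $S$ of $g$ (so that $g\in Fil_P^1\setminus Fil_P^0$). The auxiliary-function formula then gives
\[(g\otimes h;f)_P=-\sum_{Q\notin S}\ord_Q(f_P)\Tr_{Q/k}\bigl(s_Q(g)\otimes s_Q(h)\bigr),\]
in which the individual factors $s_Q(h)$ are generally nonzero, so some global cancellation must be invoked. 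To conclude I would translate into the Somekawa presentation from step one: after a modification of $h$ by $\res_{\eta/k}$ of a suitable zero-cycle (in the spirit of case~(3) of the proof of Proposition \ref{Phi}, using $s_P(h)=0$ to ensure the modification leaves the symbol at $P$ unchanged), the displayed sum is identified with a genuine Weil reciprocity relation in the Kahn--Yamazaki presentation, and hence vanishes in $K(k;\mathcal{F}_1,\ldots,\mathcal{F}_r,\underline{CH}_0(C)^0)$. The delicate point is making this reduction precise in the presence of the nontrivial modulus of $g$ at $P$ (the auxiliary function $f_P$ is only congruent to $1$ modulo $S\setminus\{P\}$, not modulo the full modulus support), and this is exactly where the Somekawa identification of step one plays its essential role.
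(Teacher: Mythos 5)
Your identification of the $K$-group with the Kahn--Yamazaki Somekawa group via Remark \ref{prod} and Theorem 5.1.8 of \cite{IR} is exactly the paper's argument for Assumption \ref{AS2}, and your treatment of the case $g\in Fil_P^0\mathcal{M}(\eta_C)$ of Assumption \ref{AS1} (specialize everything and use $s_P(h)=0$) also matches the paper. The problem is the case you yourself flag as ``the main obstacle'': when $P$ lies in the support of the modulus of $g$. There your argument is not a proof but a description of what a proof would have to accomplish. The displayed sum $-\sum_{Q\notin S}\ord_Q(f_P)\Tr_{Q/k}(s_Q(g)\otimes s_Q(h))$ is \emph{not} a Weil reciprocity relation in the Somekawa presentation: a genuine relation would be $\sum_{Q\in C}(g\otimes h;f_P)_Q=0$, and that sum contains contributions at the points of $S$ --- which are precisely the symbols you are trying to compute, since $f_P$ is only congruent to $1$ modulo the modulus away from $P$, not at $P$ itself. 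Invoking reciprocity here is circular. Likewise, the modification of $h$ by $\res_{\eta/k}$ of a zero-cycle, borrowed from case (3) of Proposition \ref{Phi}, buys you nothing: the natural candidate is $h-\res_{\eta/k}(s_P(h))$, and since $s_P(h)=0$ this is just $h$ again. Note also that case (3) of Proposition \ref{Phi} \emph{assumes} \ref{AS1} to dispose of the correction term; you cannot reuse that manoeuvre while proving \ref{AS1}.

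What actually closes the argument in the paper is a local description of the symbol, not a global cancellation. For curve-like sheaves (def.\ 11.1 in \cite{KY}) one may assume the $g_i$ have disjoint supports (prop.\ 11.11 in \cite{KY}), and then lemma 8.5 and prop.\ 11.6 of \cite{KY} give the explicit formula
\[(g_1\otimes\dots\otimes g_r\otimes h;f)_P=s_P(g_1)\otimes\dots\otimes\partial_P(g_i,f)\otimes\dots\otimes s_P(g_r)\otimes s_P(h),\]
in which $s_P(h)=0$ appears as a tensor factor, so the symbol vanishes on the nose. The general case is then reduced to this one: each $g_i\in\mathcal{F}_i(U_i)$ induces, by adjunction, a morphism $\varphi_i:h_0^{\Nis}(U_i)\to\mathcal{F}_i$ sending the class of the diagonal to $g_i$ (Remark \ref{curvelike}); the induced map $\varphi$ on Somekawa $K$-groups carries $([\Delta_1]\otimes\dots\otimes[\Delta_r]\otimes h;f)_P$ to $(g_1\otimes\dots\otimes g_r\otimes h;f)_P$, and the former vanishes because each $h_0^{\Nis}(U_i)$ is curve-like. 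You would need to supply this (or an equivalent) mechanism; as written, the key step of your proof is missing.
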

\begin{proof} By remark \ref{prod} we get an isomorphism \[T(T(\mathcal{F}_{1},\dots,\mathcal{F}_{r}),
\underline{CH}_{0}(C)^{0})(k)\simeq
T(\mathcal{F}_{1},\dots,\mathcal{F}_{r},\underline{CH}_{0}(C)^{0})(k).\] Moreover, by theorem  5.1.8. in \cite{IR} we get that the groups
$K^{geo}(k;\mathcal{F}_{1},\dots,\mathcal{F}_{r},\underline{CH}_{0}(C)^{0})$ and $T(\mathcal{F}_{1},\dots,\mathcal{F}_{r},\underline{CH}_{0}(C)^{0})(k)$ are equal and they coincide with the Somekawa type $K$-group $K(k;\mathcal{F}_{1},\dots,\mathcal{F}_{r},\underline{CH}_{0}(C)^{0})$. We conclude that assumption \ref{AS2} holds.\\
Regarding the assumption \ref{AS1}, let $g_{i}\in \mathcal{F}_{i}(\eta_{C})$ and $h\in\underline{CH}_{0}(C)^{0}(\eta_{C})$ such that $s_{P}(h)=0$ for some closed point $P\in C$. Let moreover $f\in k(C)^{\times}$. We need to verify that $(g_{1}\otimes\dots\otimes g_{r}\otimes h;f)_{P}=0$. If $g_{i}\in \mathcal{F}_{i,C,P}$, for every $i\in\{1,\dots,r\}$, then
\[(g_{1}\otimes\dots\otimes g_{r}\otimes h;f)_{P}=\ord_{P}(f)
s_{P}(g_{1})\otimes\dots\otimes s_{P}(g_{r})\otimes s_{P}(h)=0.\] Assume $P$ is in the support of $g_{i}$ for some $i\in\{1,\dots,r\}$.\\
We first treat the case when $\mathcal{F}_{i}$ is curve-like (see def. 11.1 in \cite{KY})), for $i=1,\dots,r$.
For such $\mathcal{F}_{i}$ it suffices to consider elements $g_{i}\in \mathcal{F}_{i}(\eta_{C})$ with disjoint supports (proposition 11.11 in \cite{KY}). In this case the claim follows by the explicit computation of the local symbol (lemma 8.5 and proposition 11.6 in \cite{KY}). Namely, if $P\in\supp(g_{i})$, then the local symbol at $P$ is given by the formula
\[(g_{1}\otimes\dots\otimes g_{r}\otimes h;f)_{P}=
s_{P}(g_{1})\otimes\dots\partial_{P}(g_{i},f)\otimes\dots\otimes s_{P}(g_{r})
\otimes s_{P}(h)=0,\]
where $\partial_{P}(g_{i},f)$ is the symbol at $P$ defined in section 4.1 of \cite{KY}.\\
Assume now that $\mathcal{F}_{i}$ is general, for $i=1,\dots,r$. Since $g_{i}\in\mathcal{F}_{i}(\eta_{C})$ and $\mathcal{F}_{i}(\eta_{C})\simeq\lim\limits_{\longrightarrow} \mathcal{F}_{i}(U)$,  there is an open dense subset $U_{i}\subset C$ such that $g_{i}\in\mathcal{F}(U_{i})$, for $i=1,\dots,r$. By remark \ref{curvelike} we get that the sections $g_{i}$ induce morphisms in $\HI_{\Nis}$, $\varphi_{i}:h_{0}^{\Nis}(U_{i})\to\mathcal{F}$. In particular, we get a homomorphism
\[\varphi=\varphi_{1}\otimes\dots\otimes\varphi_{r}\otimes 1:  K(k;h_{0}^{\Nis}(U_{1}),\dots,h_{0}^{\Nis}(U_{r}),
\underline{CH}_{0}(C)^{0})
\to K(k;\mathcal{F}_{1},\dots,\mathcal{F}_{r},\underline{CH}_{0}(C)^{0})\] with the property \[(g_{1}\otimes\dots\otimes g_{r}\otimes h;f)_{P}=\varphi(([\Delta_{1}]\otimes\dots\otimes[\Delta_{r}]\otimes h;f)_{P}).\]
Notice that the latter element vanishes, because $h_{0}^{\Nis}(U_{i})$ is curve-like, for $i=1,\dots,r$ (lemma 11.2(c) in \cite{KY}) and hence $([\Delta_{1}]\otimes\dots\otimes[\Delta_{r}]\otimes h;f)_{P}=0$.

\end{proof}
\begin{cor} Let $\mathcal{F}_{1},\dots,\mathcal{F}_{r}\in\HI_{\Nis}$. Let $\mathcal{M}=T(\mathcal{F}_{1},\dots,\mathcal{F}_{r})$ and let $(\underline{\underline{C}})$ be the local symbol complex associated to $\mathcal{M}$ corresponding to the curve $C$. Then there is a canonical isomorphism \[H(\underline{\underline{C}})\simeq T(\mathcal{F}_{1},\dots,\mathcal{F}_{r},\underline{CH}_{0}(C)^{0})(k).\]
In particular, if $G_{1},\dots,G_{r}$ are semi-abelian varieties over $k$, then
\[H(\underline{\underline{C}})\simeq T(G_{1},\dots,G_{r},\underline{CH}_{0}(C)^{0})(k)\simeq K(k;G_{1},\dots,G_{r},\underline{CH}_{0}(C)^{0}),\] where $K(k;G_{1},\dots,G_{r},\underline{CH}_{0}(C)^{0})$ is the usual Somekawa $K$-group attached to semi-abelian varieties. (\cite{Som}, def. 1.2)
\end{cor}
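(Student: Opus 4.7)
The plan is to assemble the corollary directly from the machinery built up in Sections 3 and 4; almost no fresh work is required beyond concatenating established isomorphisms. Setting $\mathcal{M} = T(\mathcal{F}_{1},\dots,\mathcal{F}_{r})$, my first step is to verify the hypotheses of Theorem \ref{iso} for $\mathcal{M}$: this is exactly the content of Lemma \ref{HI}, which checks both Assumption \ref{AS1} and Assumption \ref{AS2} for $T(\mathcal{M}, \underline{CH}_{0}(C)^{0})$. Applying Theorem \ref{iso} then yields the canonical isomorphism
\[H(\underline{\underline{C}}) \;\simeq\; T\bigl(T(\mathcal{F}_{1},\dots,\mathcal{F}_{r}),\, \underline{CH}_{0}(C)^{0}\bigr)(k).\]

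Next I would unfold the iterated $T$ on the right. Each $\mathcal{F}_{i}$ lies in $\HI_{\Nis}$, and by Remark \ref{curvelike} so does $\underline{CH}_{0}(C)$, hence so does its direct summand $\underline{CH}_{0}(C)^{0}$. Under this hypothesis, Remark \ref{prod} (which appeals to \cite[Theorem 5.1.8]{IR}) identifies the iterated $K$-group with the unnested one, producing a canonical isomorphism
\[T\bigl(T(\mathcal{F}_{1},\dots,\mathcal{F}_{r}),\, \underline{CH}_{0}(C)^{0}\bigr) \;\simeq\; T\bigl(\mathcal{F}_{1},\dots,\mathcal{F}_{r},\, \underline{CH}_{0}(C)^{0}\bigr).\]
Chaining this with the previous isomorphism delivers the first claimed identification $H(\underline{\underline{C}}) \simeq T(\mathcal{F}_{1},\dots,\mathcal{F}_{r},\underline{CH}_{0}(C)^{0})(k)$.

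For the semi-abelian case, I would simply observe that each $G_{i}$ defines an object of $\HI_{\Nis}$ via its sheaf of sections, so the previous paragraph applies verbatim and gives the isomorphism with $T(G_{1},\dots,G_{r},\underline{CH}_{0}(C)^{0})(k)$. To pass to the classical Somekawa $K$-group I invoke the comparison already used inside Lemma \ref{HI}: by \cite[Theorem 5.1.8]{IR} together with the Kahn--Yamazaki identification cited in the introduction, $T(G_{1},\dots,G_{r},\underline{CH}_{0}(C)^{0})(k)$ coincides with $K(k;G_{1},\dots,G_{r},\underline{CH}_{0}(C)^{0})$. Since every nontrivial ingredient — in particular the delicate reduction to curve-like sheaves via the presheaves $h_{0}^{\Nis}(U_{i})$ carried out in Lemma \ref{HI} — has already been handled upstream, the corollary itself poses no new obstacle and is proved by a formal concatenation of these isomorphisms.
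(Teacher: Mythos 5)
Your proposal is correct and follows essentially the same route the paper intends: the corollary is an immediate concatenation of Lemma \ref{HI} (verifying Assumptions \ref{AS1} and \ref{AS2}), Theorem \ref{iso}, the associativity isomorphism of Remark \ref{prod} (already invoked inside the proof of Lemma \ref{HI}, using that $\underline{CH}_{0}(C)^{0}\in\HI_{\Nis}$), and the comparison with the Somekawa $K$-group via \cite[Theorem 5.1.8]{IR} and \cite{KY}. No gaps.
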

\vspace{5pt}
\subsection{The $\mathbb{G}_{a}$-Case}
In this subsection we consider reciprocity functors $\mathcal{M}_{1},\dots,\mathcal{M}_{r}$, $r\geq 0$ and set $\mathcal{M}_{0}=\mathbb{G}_{a}$. We consider the $K$-group of reciprocity functors $T(\mathbb{G}_{a},\mathcal{M}_{1},\dots,\mathcal{M}_{r})$.
\begin{lem} The $K$-group $T(T(\mathbb{G}_{a},\mathcal{M}_{1},\dots,\mathcal{M}_{r}),
\underline{CH}_{0}(C)^{0})$ satisfies the assumption \ref{AS1}.
\end{lem}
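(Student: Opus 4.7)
The goal is to show that whenever $g \in T(\mathbb{G}_{a},\mathcal{M}_{1},\dots,\mathcal{M}_{r})(\eta_{C})$, $h \in \underline{CH}_{0}(C)(\eta_{C})$ satisfies $s_{P}(h)=0$, and $f \in k(C)^{\times}$, the local symbol $(g\otimes h;f)_{P}$ vanishes in $T(T(\mathbb{G}_{a},\mathcal{M}_{1},\dots,\mathcal{M}_{r}),\underline{CH}_{0}(C))(k)$. The plan is to use the surjection $T(\mathbb{G}_{a},\mathcal{M}_{1},\dots,\mathcal{M}_{r},\underline{CH}_{0}(C)) \twoheadrightarrow T(T(\mathbb{G}_{a},\mathcal{M}_{1},\dots,\mathcal{M}_{r}),\underline{CH}_{0}(C))$ of remark \ref{prod} to lift the computation into an $(r+2)$-fold $K$-group, where property (a) lets us work with elementary tensors. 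By property (a), $g$ is a sum of traces $\Tr_{\eta_{D}/\eta_{C}}(a\otimes m_{1}\otimes\dots\otimes m_{r})$ along finite covers $\lambda:D\to C$; functoriality of the local symbol (as in the last paragraph of the proof of proposition \ref{Phi}) together with $s_{Q}(\lambda^{\ast}h)=0$ for every $Q\in\lambda^{-1}(P)$ reduces the problem, after renaming $D$ as $C$, to verifying the vanishing for a single elementary tensor $g=a\otimes m_{1}\otimes\dots\otimes m_{r}$.

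Next I would split into cases according to the position of $P$. The easy case is when $a$ and each $m_{i}$ lie in $Fil^{0}_{P}$: property (b)(i) then places $g\otimes h \in Fil^{0}_{P}$ of the large $K$-group, while property (b)(ii) gives
\[
s_{P}(g\otimes h)=s_{P}(a)\otimes s_{P}(m_{1})\otimes\dots\otimes s_{P}(m_{r})\otimes s_{P}(h)=0,
\]
since $s_{P}(h)=0$, so that $(g\otimes h;f)_{P}=\ord_{P}(f)\,\Tr_{P/k}(0)=0$. Thus the real content is the case when $g$ has a genuine pole at $P$, which for our purposes means $a\in\mathbb{G}_{a}(\eta_{C})$ has a pole at $P$ (the poles of the $m_{i}$ will be handled by the same mechanism together with the bi-additivity of the local symbol).

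The hard case, which I expect to be the main obstacle, is thus when the $\mathbb{G}_{a}$-factor $a$ fails to be regular at $P$. Choosing an auxiliary function $f_{P}$ for $f$ at $P$ relative to a modulus $\mathfrak{m}$ for $g$ (and recalling that $\underline{CH}_{0}(C)^{0}\in RF_{0}$, so $h$ does not contribute to the modulus), the definition of the local symbol gives
\[
(g\otimes h;f)_{P}=-\sum_{Q\notin S}\ord_{Q}(f_{P})\,s_{Q}(g)\otimes s_{Q}(h).
\]
The difficulty is that $s_{Q}(h)$ need not vanish for $Q\ne P$, so $s_{P}(h)=0$ cannot be invoked term by term. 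The plan is to use the additive structure of $\mathbb{G}_{a}$ to write $a=a_{\mathrm{pol}}+a_{\mathrm{reg}}$ at $P$, so that the problem is isolated at the polar part $a_{\mathrm{pol}}$; then rewrite the above sum as $-\sum_{Q\notin S}\ord_{Q}(f_{P})\,s_{Q}(g)\otimes(s_{Q}(h)-s_{P}(h))$ (legal since $s_{P}(h)=0$), and recognize the bracketed $0$-cycle classes $s_{Q}(h)-s_{P}(h)$ as values of a single section of $\underline{CH}_{0}(C)^{0}$ over $\eta_{C}$, to which the reciprocity law on $C$ can be applied. Combined with the explicit residue-like formula for the $\mathbb{G}_{a}$-local symbol and the Mackey relations in the $(r+2)$-fold $K$-group, this should collapse the expression to zero and thereby verify assumption \ref{AS1}.
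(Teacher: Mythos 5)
Your argument has a genuine gap, and it also misses the observation that makes the lemma trivial. The paper's proof is two lines: the functorial surjection $T(\mathbb{G}_{a},\mathcal{M}_{1},\dots,\mathcal{M}_{r},\underline{CH}_{0}(C)^{0})(k)\twoheadrightarrow T(T(\mathbb{G}_{a},\mathcal{M}_{1},\dots,\mathcal{M}_{r}),\underline{CH}_{0}(C)^{0})(k)$ that you invoke at the start, combined with the main theorem of R\"{u}lling--Yamazaki (\cite{RuYa}, Theorem 1.1), which says the source group is \emph{zero}. Hence the target group is zero, and the local symbol $(g\otimes h;f)_{P}$, being an element of a zero group (note that $s_{P}(h)=0$ forces $h$ to have degree $0$, so the symbol lands in the $\underline{CH}_{0}(C)^{0}$-summand), vanishes for free. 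No case analysis, no auxiliary functions, no decomposition into polar and regular parts is needed. You use the surjection only to reduce to elementary tensors, not to import the vanishing, which is the entire point of introducing it.

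The gap in your own route is in the ``hard case.'' After rewriting the auxiliary-function expression as $-\sum_{Q\notin S}\ord_{Q}(f_{P})\,s_{Q}(g)\otimes(s_{Q}(h)-s_{P}(h))$, you assert that the residue formula for the $\mathbb{G}_{a}$-symbol and the Mackey relations ``should collapse the expression to zero.'' This is a plan, not a proof: the terms $s_{Q}(h)-s_{P}(h)$ for $Q\neq P$ need not vanish, the resulting sum is essentially the symbol $(g\otimes h';f)_{P}$ for a modified degree-zero cycle $h'$ (so the manipulation is circular as stated), and actually carrying out the collapse would amount to redoing the local-symbol computation of \cite{RuYa} (their Section 3, with the choice of $f$ and $g$ via the residue exact sequence for $\Omega^{1}_{k(C)/k}$) --- which is exactly the content the citation is there to supply. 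As written, the decisive step is missing.
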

\begin{proof} We have a functorial surjection  \[T(\mathbb{G}_{a},\mathcal{M}_{1},\dots,\mathcal{M}_{r},
\underline{CH}_{0}(C)^{0})(k)\twoheadrightarrow T(T(\mathbb{G}_{a},\mathcal{M}_{1},\dots,\mathcal{M}_{r}),
\underline{CH}_{0}(C)^{0})(k).\]
The first group vanishes by the main result of \cite{RuYa} (theorem 1.1). Therefore, the second group vanishes as well. In particular, \ref{AS1} is satisfied.

\end{proof}
\begin{lem}\label{Ga} The $K$-group $T(T(\mathbb{G}_{a},\mathcal{M}_{1},\dots,\mathcal{M}_{r}),
\underline{CH}_{0}(C)^{0})$ satisfies the assumption \ref{AS2}.
\end{lem}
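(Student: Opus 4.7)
The plan is to bootstrap the vanishing statement $T(T(\mathbb{G}_{a},\mathcal{M}_{1},\dots,\mathcal{M}_{r}),\underline{CH}_{0}(C)^{0})(k)=0$ established in the previous lemma to the corresponding vanishing at the level of the Lax Mackey functor $LT=K^{geo}$. Since $T(T(\mathbb{G}_{a},\dots),\underline{CH}_{0}(C)^{0})(k)$ is a quotient of the geometric $K$-group $K^{geo}(k;T(\mathbb{G}_{a},\dots),\underline{CH}_{0}(C)^{0})$, verification of assumption \ref{AS2} reduces to showing
\[K^{geo}(k;T(\mathbb{G}_{a},\mathcal{M}_{1},\dots,\mathcal{M}_{r}),\underline{CH}_{0}(C)^{0})=0.\]

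The key step is to construct a surjection
\[\phi:K^{geo}(k;\mathbb{G}_{a},\mathcal{M}_{1},\dots,\mathcal{M}_{r},\underline{CH}_{0}(C)^{0})\twoheadrightarrow K^{geo}(k;T(\mathbb{G}_{a},\mathcal{M}_{1},\dots,\mathcal{M}_{r}),\underline{CH}_{0}(C)^{0}).\]
At the Mackey-product level, associativity of $\otimes^{M}$ and the quotient $\mathbb{G}_{a}\otimes^{M}\mathcal{M}_{1}\otimes^{M}\cdots\otimes^{M}\mathcal{M}_{r}\twoheadrightarrow T(\mathbb{G}_{a},\mathcal{M}_{1},\dots,\mathcal{M}_{r})$ supply a surjection of the underlying Mackey products evaluated at $k$. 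One then checks that this descends to $K^{geo}$: given a smooth complete curve $D/k$, sections $g_{0}\in\mathbb{G}_{a}(\eta_{D})$, $g_{i}\in\mathcal{M}_{i}(\eta_{D})$ with moduli $\mathfrak{m}_{0},\dots,\mathfrak{m}_{r}$, a section $g_{r+1}\in\underline{CH}_{0}(C)^{0}(\eta_{D})$ (of modulus $0$, since $\underline{CH}_{0}(C)^{0}\in RF_{0}$), and a function $f\in k(D)^{\times}$ with $f\equiv 1$ modulo $\sup_{i}\mathfrak{m}_{i}$, property (b)(i) guarantees that $g_{0}\otimes\cdots\otimes g_{r}\in T(\mathbb{G}_{a},\dots)(\eta_{D})$ admits $\sup_{i}\mathfrak{m}_{i}$ as a modulus, while property (b)(ii) gives the identity $s_{P}(g_{0}\otimes\cdots\otimes g_{r})=s_{P}(g_{0})\otimes\cdots\otimes s_{P}(g_{r})$ for each $P\not\in S$. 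Thus the defining geometric relation
\[\sum_{P\not\in S}\ord_{P}(f)\,s_{P}(g_{0})\otimes s_{P}(g_{1})\otimes\cdots\otimes s_{P}(g_{r+1})=0\]
in the source maps to
\[\sum_{P\not\in S}\ord_{P}(f)\,s_{P}(g_{0}\otimes g_{1}\otimes\cdots\otimes g_{r})\otimes s_{P}(g_{r+1})=0,\]
which is a defining geometric relation in the target. Surjectivity of $\phi$ follows from property (a): every generator of the target is a trace of a simple tensor, which manifestly lifts through the Mackey-product quotient.

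It therefore suffices to show that the source $K^{geo}(k;\mathbb{G}_{a},\mathcal{M}_{1},\dots,\mathcal{M}_{r},\underline{CH}_{0}(C)^{0})$ itself vanishes. The vanishing of the corresponding $T$-group is precisely the main result of \cite{RuYa}. I anticipate that the proof there proceeds by producing, for each generator, an explicit collection of Weil reciprocity relations on curves that witness the vanishing; since these are, by definition, the relations imposed in $K^{geo}$, the same argument upgrades to $K^{geo}$-level vanishing, and combined with $\phi$ concludes the proof.

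The main obstacle is precisely this upgrade: confirming that the argument of \cite{RuYa} can be read at the level of $LT=K^{geo}$, rather than only at the level of $T$. Should the \cite{RuYa} proof invoke any relations beyond Mackey and curve-modulus ones, one would need either to adapt its strategy or to give a direct argument — most naturally by exploiting the $k$-linearity of the local symbol in its $\mathbb{G}_{a}$-slot combined with a moving-type construction on curves — to exhibit enough geometric cancellations to kill an arbitrary generator of $K^{geo}(k;\mathbb{G}_{a},\mathcal{M}_{1},\dots,\mathcal{M}_{r},\underline{CH}_{0}(C)^{0})$.
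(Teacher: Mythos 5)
Your opening reduction is the same as the paper's: since $T(T(\mathbb{G}_{a},\mathcal{M}_{1},\dots,\mathcal{M}_{r}),\underline{CH}_{0}(C)^{0})(k)$ is a quotient of the geometric $K$-group, it suffices to show the latter vanishes; and your surjection $\phi$ is correctly justified by properties (a), (b)(i), (b)(ii) (the paper does essentially the same thing implicitly by writing a generator of the bracketed geometric group as $\{(x_{0},\dots,x_{r}),\zeta\}^{geo}$). But the proof stops exactly where the real difficulty begins, and you say so yourself: everything rests on the hope that the R\"{u}lling--Yamazaki vanishing argument ``upgrades'' from $T$ to $K^{geo}$. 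That upgrade is the content of the lemma, and it is not automatic. The R\"{u}lling--Yamazaki computation writes a generator as a full sum of local symbols $\sum_{P\in C}(\xi;f)_{P}$ and then invokes reciprocity; to run this \emph{inside} a geometric $K$-group you must first construct a local symbol with values in that group satisfying properties (1)--(3) of section 2.2. Following Serre, this requires that ``regular'' elements have a well-defined specialization, i.e.\ that the natural map
\[h:T(\mathbb{G}_{a},\mathcal{M}_{1},\dots,\mathcal{M}_{r})(\mathcal{O}_{C,P})\otimes\underline{CH}_{0}(C)^{0}(\mathcal{O}_{C,P})\longrightarrow T(\mathbb{G}_{a},\mathcal{M}_{1},\dots,\mathcal{M}_{r})(\eta_{C})\otimes\underline{CH}_{0}(C)^{0}(\eta_{C})\]
be injective. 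This is the key technical point of the paper's proof, and it is where the $\mathbb{G}_{a}$-factor is used a second time: $T(\mathbb{G}_{a},\mathcal{M}_{1},\dots,\mathcal{M}_{r})$ is a sheaf of $\kappa$-vector spaces with $\kappa=\Q$ or $\mathbb{F}_{p}$, one has $\underline{CH}_{0}(C)^{0}(\mathcal{O}_{C,P})=\underline{CH}_{0}(C)^{0}(\eta_{C})$, and injectivity then follows from the injectivity axiom for reciprocity functors together with flatness of $\kappa\otimes_{\Z}\underline{CH}_{0}(C)^{0}(\mathcal{O}_{C,P})$ over $\kappa$. None of this appears in your proposal, and without it the sentence ``the same argument upgrades to $K^{geo}$-level vanishing'' is an assertion, not a proof.

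Two further remarks. First, the detour through $K^{geo}(k;\mathbb{G}_{a},\mathcal{M}_{1},\dots,\mathcal{M}_{r},\underline{CH}_{0}(C)^{0})$ does not make the missing step easier: one would face the analogous well-definedness problem for a local symbol valued in that group, now for an $(r+2)$-fold tensor product, whereas the paper's two-factor version is exactly what the flatness trick handles. Second, your fallback (``$k$-linearity in the $\mathbb{G}_{a}$-slot plus a moving-type construction'') gestures at the right mechanism but leaves it unspecified; the concrete input, as in R\"{u}lling--Yamazaki, is the choice of $g\in k(C)^{\times}$ via the residue sequence $\Omega^{1}_{k(C)/k}\to\bigoplus_{P}\Omega^{1}_{k(C)/k}/\Omega^{1}_{C,P}\to k\to 0$ and of $f$ with $\ord_{P_{0}}(f)=1$, $\ord_{P_{1}}(f)=-1$, after which the generator becomes the total sum of local symbols and dies by the reciprocity of the $K^{geo}$-valued symbol. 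So the architecture of your argument is compatible with the paper's, but the decisive construction is missing.
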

\begin{proof} To prove the lemma, it suffices to show that  $K^{geo}(k;T(\mathbb{G}_{a},\mathcal{M}_{1},\dots,\mathcal{M}_{r}),
\underline{CH}_{0}(C)^{0})$ vanishes. \\
\underline{Claim:} There is a well defined local symbol
\[T(\mathbb{G}_{a},\mathcal{M}_{1},\dots,\mathcal{M}_{r})(\eta_{C})\otimes
\underline{CH}_{0}(C)^{0}(\eta_{C})\otimes k(C)^{\times}\to K^{geo}(k;T(\mathbb{G}_{a},\mathcal{M}_{1},\dots,\mathcal{M}_{r}),
\underline{CH}_{0}(C)^{0})\] satisfying the unique properties (1)-(3) of section 2.2.  \\
To have a well defined local symbol following Serre (\cite{Ser}), we need for every closed point $P\in C$ the natural map
\[h:T(\mathbb{G}_{a},\mathcal{M}_{1},\dots,\mathcal{M}_{r})(\mathcal{O}_{C,P})
\otimes
\underline{CH}_{0}(C)^{0}(\mathcal{O}_{C,P})\to T(\mathbb{G}_{a},\mathcal{M}_{1},\dots,\mathcal{M}_{r})(\eta_{C})\otimes
\underline{CH}_{0}(C)^{0}(\eta_{C})\] to be injective. For, if $g_{1}\in T(\mathbb{G}_{a},\mathcal{M}_{1},\dots,\mathcal{M}_{r})(\eta_{C})$, $g_{2}\in\underline{CH}_{0}(C)^{0}(\eta_{C})$, then we say that
$g_{1}\otimes g_{2}$ is regular, if $g_{1}\otimes g_{2}=h(\tilde{g}_{1}\otimes\tilde{g}_{2})$, for some $\tilde{g}_{1}\otimes\tilde{g}_{2}\in T(\mathbb{G}_{a},\mathcal{M}_{1},\dots,\mathcal{M}_{r})(\mathcal{O}_{C,P})
\otimes
\underline{CH}_{0}(C)^{0}(\mathcal{O}_{C,P})$. For such $g_{1}\otimes g_{2}$ we can define
$(g_{1}\otimes g_{2};f)_{P}=\ord_{P}(f)s_{P}(\tilde{g}_{1})\otimes s_{P}(\tilde{g}_{2})$. For non regular $g_{1}\otimes g_{2}$ we define the local symbol using an auxiliary function $f_{P}$ for $f$ at $P$ as usual. (see section 2.2) The symbol $(.;.)_{P}$ is well defined, since there is a unique lifting $\tilde{g}_{1}\otimes\tilde{g}_{2}$ and the unique properties (1)-(3) of section 2.2 are satisfied by the very definition of the group $K^{geo}(k;T(\mathbb{G}_{a},\mathcal{M}_{1},\dots,\mathcal{M}_{r}),
\underline{CH}_{0}(C)^{0})$. Therefore to prove the claim, it suffices to show the injectivity of $h$. \\ Note that we have an equality
$\underline{CH}_{0}(C)^{0}(\mathcal{O}_{C,P}):=\underline{CH}_{0}(C\times k(\Spec(\mathcal{O}_{C,P})))^{0}=\underline{CH}_{0}(C)^{0}(\eta_{C})$. Moreover, the map $T(\mathbb{G}_{a},\mathcal{M}_{1},\dots,\mathcal{M}_{r})(\mathcal{O}_{C,P})\to
 T(\mathbb{G}_{a},\mathcal{M}_{1},\dots,\mathcal{M}_{r})(\eta_{C})$ is injective  by the injectivity condition of reciprocity functors. Next, notice that $T(\mathbb{G}_{a},\mathcal{M}_{1},\dots,\mathcal{M}_{r})$ becomes a reciprocity functor of either $\Q$ or $\mathbb{F}_{p}$-vector spaces, depending on whether $\ch F$ is $0$ or $p>0$. Setting $\kappa=\Q$ or $\Z/p$ depending on the case we have,
\begin{eqnarray*}&&T(\mathbb{G}_{a},\mathcal{M}_{1},\dots,\mathcal{M}_{r})
(\mathcal{O}_{C,P})
\otimes_{\Z} \underline{CH}_{0}(C)^{0}(\mathcal{O}_{C,P})=\\
&&T(\mathbb{G}_{a},\mathcal{M}_{1},\dots,\mathcal{M}_{r})
(\mathcal{O}_{C,P})
\otimes_{\kappa} (\kappa\otimes_{\Z}\underline{CH}_{0}(C)^{0}(\mathcal{O}_{C,P})).
\end{eqnarray*} Since the $\kappa$-module $\kappa\otimes_{\Z}\underline{CH}_{0}(C)^{0}(\mathcal{O}_{C,P})$ is flat, the claim follows. \\
To prove the lemma, we imitate the proof of the vanishing of $T(\mathbb{G}_{a},\mathcal{M}_{1},\dots,\mathcal{M}_{r},
\underline{CH}_{0}(C)^{0})(k)$ in \cite{RuYa}. Namely, let $\{(x_{0},\dots,x_{r}),\zeta\}^{geo}$ be a generator of $K^{geo}(k;T(\mathbb{G}_{a},\mathcal{M}_{1},\dots,\mathcal{M}_{r}),
\underline{CH}_{0}(C)^{0})$. Since $k$ is algebraically closed, we may assume $\zeta=[P_{0}]-[P_{1}]$, for some closed points $P_{0},P_{1}\in C$. Then we can show that \[\{(x_{0},\dots,x_{r}),\zeta\}^{geo}=\sum_{P\in C}((x_{0}g\otimes \res_{k(C)/k}(x_{1})\dots\otimes \res_{k(C)/k}(x_{r}))\otimes \eta_{C};f)_{P}=0,\] where $f\in k(C)^{\times}$ is a function such that $\ord_{P_{0}}(f)=1$ and $\ord_{P_{1}}(f)=-1$ and $g\in k(C)^{\times}$ is obtained using the exact sequence
\[\Omega_{k(C)/k}^{1}\longrightarrow\bigoplus_{P\in C}\frac{\Omega^{1}_{k(C)/k}}{\Omega^{1}_{C,P}}
\stackrel{\sum\Res_{P}}{\longrightarrow}k\longrightarrow 0.\] For more details on the above local symbol computation we refer to section 3 in \cite{RuYa}. In particular, we refer to 3.2 and 3.4 for the choice of the functions $f,g\in k(C)^{\times}$.

\end{proof}
\begin{cor} Let $\mathcal{M}_{1},\dots,\mathcal{M}_{r}$ be reciprocity functors. Let $\mathcal{M}=T(\mathbb{G}_{a},\mathcal{M}_{1},\dots,\mathcal{M}_{r})$. Then for any smooth complete curve $C$ over $k$, $H(\underline{\underline{C}})=0$. In particular, if $\ch F=0$, the complex
$\displaystyle\Omega_{k(C)}^{n+1}\stackrel{\Res_{P}}{\longrightarrow}\bigoplus_{P\in C}\Omega^{n}_{k}
\stackrel{\sum_{P}}{\longrightarrow}\Omega_{k}^{n}$ is exact.
\end{cor}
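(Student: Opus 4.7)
The plan is essentially to combine the two preceding lemmas with Theorem \ref{iso}, and then translate the vanishing statement into the classical residue sequence via the comparison with K\"ahler differentials.

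First, by the two lemmas just proved in Subsection 4.2, the reciprocity functor $\mathcal{M}=T(\mathbb{G}_{a},\mathcal{M}_{1},\dots,\mathcal{M}_{r})$ has the property that $T(\mathcal{M},\underline{CH}_{0}(C)^{0})$ satisfies both assumption \ref{AS1} and assumption \ref{AS2}. Theorem \ref{iso} then provides a canonical isomorphism
\[
H(\underline{\underline{C}})\;\simeq\;T\bigl(T(\mathbb{G}_{a},\mathcal{M}_{1},\dots,\mathcal{M}_{r}),\,\underline{CH}_{0}(C)^{0}\bigr)(k).
\]
The functorial surjection from Remark \ref{prod} (or rather its evaluation at $\Spec k$) together with the main result of \cite{RuYa} gives a surjection
\[
0=T(\mathbb{G}_{a},\mathcal{M}_{1},\dots,\mathcal{M}_{r},\underline{CH}_{0}(C)^{0})(k)\twoheadrightarrow T(T(\mathbb{G}_{a},\mathcal{M}_{1},\dots,\mathcal{M}_{r}),\underline{CH}_{0}(C)^{0})(k),
\]
so the right-hand side vanishes, which yields $H(\underline{\underline{C}})=0$. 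This is the same trick already used to verify assumption \ref{AS1} in the first lemma of this subsection, so there is essentially no new work.

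For the second assertion, assume $\ch F=0$ and take $\mathcal{M}_{1}=\cdots=\mathcal{M}_{n}=\mathbb{G}_{m}$. By Theorem 5.4.7 of \cite{IR} there are canonical isomorphisms
\[
T(\mathbb{G}_{a},\mathbb{G}_{m}^{\times n})(k)\;\simeq\;\Omega^{n}_{k/\Z},\qquad T(\mathbb{G}_{a},\mathbb{G}_{m}^{\times n+1})(\eta_{C})\;\simeq\;\Omega^{n+1}_{k(C)/\Z},
\]
and under these identifications one checks (using properties (1)--(3) of Section 2.2 characterizing the local symbol, together with the explicit formulas in \cite{IR} realizing the isomorphism with K\"ahler differentials) that the local symbol $(\,\cdot\,;\,\cdot\,)_{P}$ becomes the usual residue map $\Res_{P}\colon\Omega^{n+1}_{k(C)/\Z}\to\Omega^{n}_{k/\Z}$. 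Since the natural map $(\mathcal{M}\otimes^{M}\mathbb{G}_{m})(\eta_{C})\twoheadrightarrow T(\mathcal{M},\mathbb{G}_{m})(\eta_{C})=\Omega^{n+1}_{k(C)/\Z}$ is surjective, the image of $(\,\cdot\,;\,\cdot\,)_{C}$ in $\bigoplus_{P}\Omega^{n}_{k}$ agrees with the image of $\bigoplus_{P}\Res_{P}$. Combined with $H(\underline{\underline{C}})=0$, this gives the exactness of
\[
\Omega^{n+1}_{k(C)/\Z}\stackrel{\sum_{P}\Res_{P}}{\longrightarrow}\bigoplus_{P\in C}\Omega^{n}_{k/\Z}\stackrel{\sum_{P}}{\longrightarrow}\Omega^{n}_{k/\Z}
\]
at the middle term.

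The main obstacle here is not conceptual but bookkeeping: one must verify carefully that the local symbol on $T(\mathbb{G}_{a},\mathbb{G}_{m}^{\times n})$ at a closed point $P\in C$ really corresponds to the classical residue $\Res_{P}$ under the IR comparison isomorphism. Everything else (vanishing of the homology and reduction to the residue sequence) is a direct application of Theorem \ref{iso}, Lemmas 4.2--4.3 of this section, and the Rülling--Yamazaki vanishing theorem.
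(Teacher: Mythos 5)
Your proposal is correct and follows essentially the same route as the paper: the paper's own proof is just two sentences (deduce $H(\underline{\underline{C}})=0$ from the two lemmas of Subsection 4.2 together with Theorem \ref{iso} and the R\"ulling--Yamazaki vanishing, then invoke $\Omega^{n}\simeq T(\mathbb{G}_{a},\mathbb{G}_{m}^{\times n})$ and the fact that the complex factors through $\Omega^{n+1}_{k(C)}$), and your write-up merely fills in the same steps in more detail. The only point worth noting is that for the second assertion one does not actually need the image of $(\,\cdot\,;\,\cdot\,)_{C}$ to \emph{equal} the image of $\bigoplus_{P}\Res_{P}$ (which would implicitly use associativity of $T$); containment in one direction plus the classical residue theorem already forces all three subgroups of $\ker(\sum_{P})$ to coincide.
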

\begin{proof} When $\ch F=0$, Ivorra and R\"{u}lling showed an isomorphism of reciprocity functors $\theta:\Omega^{n}\simeq T(\mathbb{G}_{a},\mathbb{G}^{\times n})$ (Theorem 5.4.7 in \cite{IR}). Moreover, the complex $(\underline{\underline{C}})$ factors through $\Omega^{n+1}_{k(C)}$.

\end{proof}
\vspace{3pt}
\section{The non-algebraically closed Case}
In order to prove theorem \ref{iso}, we made the assumption that the curve $C$ is over an algebraically closed field $k$. The reason this assumption was necessary is that for a general reciprocity functor $\mathcal{M}$ the local symbol at a closed point $P\in C$ does not have a local description, but rather depends on the other closed points. Namely, if $P$ is in the support of the modulus $\mathfrak{m}$ corresponding to a section $g\in\mathcal{M}(\eta_{C})$, then we have an equality
\[(g;f)_{P}=-\sum_{Q\not\in S}\ord_{Q}(f_{P})\Tr_{Q/k}(s_{Q}(g)),\] where $f_{P}$ is an auxiliary function for $f$ at $P$. If for some reciprocity functor $\mathcal{M}$ we have a local description $(g;f)_{P}=\Tr_{P/k}(\partial_{P}(g;f)))$, where $\partial_{P}(g;f)\in\mathcal{M}(P)$, for every $P\in C$, then we can obtain a complex $(\underline{\underline{C}})'$
\[(\mathcal{M}\bigotimes^{M}\mathbb{G}_{m})(\eta_{C})
\stackrel{\partial_{C}}{\longrightarrow}\bigoplus_{P\in C}\mathcal{M}(P)\stackrel{\sum_{P}\Tr_{P/k}}{\longrightarrow}\mathcal{M}(k).\]
For such a reciprocity functor $\mathcal{M}$, assuming the existence of a $k$-rational point $P_{0}\in C(k)$, we can have a generalization of theorem \ref{iso} for the complex $(\underline{\underline{C}})'$ by imposing the following two stronger conditions on $\mathcal{M}$. Namely,
we make the following assumptions.
\begin{ass}\label{AS3} Let $\mathcal{M}$ be a reciprocity functor for which we have a local description of the symbol $(g;f)_{P}=\Tr_{P/k}(\partial_{P}(g;f))$. Let $\lambda:D\to C$ be a finite morphism. Assume that for every $h\in\underline{CH}_{0}(C)(\eta_{D})$ and every closed point $P\in C$ we have an equality \[(g\otimes h;f)_{P}=\Tr_{P/k}(\partial_{P}(g,f)\otimes s_{P}(h)).\]
\end{ass}
\begin{ass}\label{AS4} We assume that for every finite extension $L/k$ we have an equality \[K^{geo}(L;\mathcal{M},\underline{CH}_{0}(C)^{0})\simeq T(\mathcal{M},\underline{CH}_{0}(C)^{0})(L).\]
\end{ass}
\begin{notn} If $E/L$ is a finite extension and $x\in\mathcal{M}(k)$, we will denote $x_{E}=\res_{E/L}(x)$.
\end{notn}
\begin{thm}\label{nonk} Let $\mathcal{M}$ be a reciprocity functor that satisfies the assumptions \ref{AS3} and \ref{AS4}. Then we have an isomorphism \begin{eqnarray*} \Phi':&&H(\underline{\underline{C}}')\stackrel{\simeq}{\longrightarrow} T(\mathcal{M},\underline{CH}_{0}(C)^{0})(k)\\
&&(a_{P})_{P\in C}\to\sum_{P\in C}\Tr_{P/k}(a_{P}\otimes ([P]-P_{0,k(P)})).
\end{eqnarray*}
\end{thm}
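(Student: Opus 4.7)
The plan is to adapt the strategy of Theorem \ref{iso} (via Propositions \ref{Phi} and \ref{Psi}) to keep track of residue fields of closed points that are not rational over $k$.

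I first construct $\Phi'$ by defining the auxiliary map
\[
\tilde\Phi'\colon \bigoplus_{P\in C}\mathcal{M}(P)\longrightarrow T(\mathcal{M},\underline{CH}_0(C))(k),\qquad (a_P)_P\longmapsto\sum_P\Tr_{P/k}(a_P\otimes[P]),
\]
where $[P]\in\underline{CH}_0(C)(k(P))$ denotes the diagonal cycle attached to the structure map $\Spec k(P)\to C$. To show $\tilde\Phi'$ vanishes on the image of $\partial_C$, take a finite cover $\lambda\colon D\to C$, sections $g\in\mathcal{M}(\eta_D)$, $f\in k(D)^\times$, and the zero-cycle $h\in\underline{CH}_0(C)(\eta_D)$ determined by the graph of $\lambda$, which satisfies $s_P(h)=[P]$ at every closed point $P\in C$. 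Assumption \ref{AS3} then yields the coordinate-wise identity $\Tr_{P/k}(\partial_P(g,f)\otimes[P])=(g\otimes h;f)_P$ in $T(\mathcal{M},\underline{CH}_0(C))(k)$. Summing over $P$ and invoking the reciprocity law for the local symbol on the reciprocity functor $T(\mathcal{M},\underline{CH}_0(C))$ gives the desired vanishing. For the descent to the $\underline{CH}_0(C)^0$-summand, when $(a_P)_P\in\ker(\sum_P\Tr_{P/k})$ the projection formula gives
\[
\sum_P\Tr_{P/k}(a_P\otimes P_{0,k(P)})=\bigl(\sum_P\Tr_{P/k}(a_P)\bigr)\otimes[P_0]=0,
\]
so $\Phi'((a_P)_P)=\sum_P\Tr_{P/k}(a_P\otimes([P]-P_{0,k(P)}))$ is a well-defined map $H(\underline{\underline{C}}')\to T(\mathcal{M},\underline{CH}_0(C)^0)(k)$.

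Next, I construct an inverse $\Psi'$. By Assumption \ref{AS4}, one has $T(\mathcal{M},\underline{CH}_0(C)^0)(k)=K^{geo}(k;\mathcal{M},\underline{CH}_0(C)^0)$, so it suffices to define $\Psi'$ on generators of the latter. A typical generator is $\Tr_{L/k}\{x\otimes([Q]-m_Q[P_{0,L}])\}^{geo}$, where $L/k$ is a finite extension, $x\in\mathcal{M}(L)$, $Q$ is a closed point of $C_L$ lying over $P\in C$, and $m_Q=[k(Q):L]$. Map this to the element of $\bigoplus_{P'\in C}\mathcal{M}(P')$ whose $P$-entry is $\Tr_{k(Q)/k(P)}(\res_{k(Q)/L}(x))$, whose $P_0$-entry is $-m_Q\Tr_{L/k}(x)$, and whose other entries vanish. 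Because $k$ is perfect, $L\subset k(Q)$ is separable and so $\Tr_{k(Q)/L}\res_{k(Q)/L}=m_Q\cdot\mathrm{id}$; together with the transitivity of traces this places the image in $\ker(\sum_P\Tr_{P/k})$. For well-definedness on the local-symbol relations defining $K^{geo}$, I follow Proposition \ref{Psi}: take a test curve $E$ over a finite extension $L/k$ with $g_1\in\mathcal{M}(\eta_E)$ of modulus $\mathfrak{m}$, $g_2\in\underline{CH}_0(C)^0(\eta_E)$, and $f\in k(E)^\times$ with $f\equiv 1\bmod\mathfrak{m}$; reduce via linearity on the $\underline{CH}_0(C)^0$-factor to $g_2=[h]-m[\res P_{0,L}]$ with $h$ a closed point of $C_{k(E)}$ inducing coverings $E'\to E$ and $E'\to C$; and invoke the reciprocity law for the local symbol on $E'$ over $L$, followed by the trace $\Tr_{L/k}$.

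Finally, I check $\Phi'\circ\Psi'=\mathrm{id}$ and $\Psi'\circ\Phi'=\mathrm{id}$ by direct computation on generators, parallel to the end of the proof of Theorem \ref{iso}; the cycle condition $\sum_P\Tr_{P/k}(a_P)=0$ enters exactly as in that argument to close up the round trip. I expect the main obstacle to be the well-definedness of $\Psi'$: the trace-restriction bookkeeping needed to match the Mackey (projection-formula) relations of $K^{geo}$ with the trace maps in the target complex $(\underline{\underline{C}}')$, together with the verification that the local-symbol relations of $K^{geo}$ are killed by $\Psi'$ in the presence of non-trivial residue field extensions---with the accompanying appeal to weak approximation of valuations over a non-algebraically closed base---amounts to a genuinely more delicate version of the algebraically closed argument of Proposition \ref{Psi}.
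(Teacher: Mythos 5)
Your proposal follows essentially the same route as the paper: the same $\Phi'$ (well defined on $\img\partial_{C}$ by Assumption \ref{AS3} plus the reciprocity law, descending to the degree-zero part via the projection formula) and the same $\Psi'$ on generators of $K^{geo}$ supplied by Assumption \ref{AS4}, with the same round-trip computation reducing to the case $L(Q)=L$. The one step you announce but do not execute --- compatibility of $\Psi'$ with the two Mackey/projection-formula relations --- is precisely what the paper's proof verifies in detail, using transitivity of traces and the base-change formula $\Tr_{E/L}(x)_{L(Q)}=\sum_{Q'\to Q}e(Q'/Q)\Tr_{E(Q')/L(Q)}(x_{E(Q')})$ (property (MF1) in \cite{IR}), so your outline matches the paper's argument.
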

\begin{proof}
We start by considering the map
\begin{eqnarray*} \Phi':&&(\bigoplus_{P\in C}\mathcal{M}(P))/\img\partial_{C}\to T(\mathcal{M},\underline{CH}_{0}(C))(k)\\
&&(a_{P})_{P\in C}\to\sum_{P\in C}\Tr_{P/k}(a_{P}\otimes [P]).
\end{eqnarray*}
The map $\Phi'$ is well defined because of the assumption \ref{AS3}. Restricting to $H(\underline{\underline{C}}')$, we obtain the map of the proposition. Moreover, we can consider the map
\begin{eqnarray*}\Psi':&&T(\mathcal{M},\underline{CH}_{0}(C)^{0})(k)\to H(\underline{\underline{C}}')\\&&
Tr_{L/k}(x\otimes([Q]-[L(Q):L][P_{0,L}]))\to (x_{P'})_{P'\in C},
\end{eqnarray*}
with $x_{P}=\Tr_{L(Q)/k(P)}(x)$, $x_{P_{0}}=-\Tr_{L(Q)/k}(x)$ and $x_{P'}=0$ otherwise.  Here $L/k$ is a finite extension, $x\in\mathcal{M}(L)$, $Q$ is a closed point of $C\times L$ having residue field $L(Q)$ that projects to $P\in C$ under the map $C\times L\to C$ with $P\neq P_{0}$. We denote by $k(P)$ the residue field of $P$.
The map $\Psi'$ will be well defined (using the same argument as in proposition \ref{Psi}), as long as we check the following:
\begin{enumerate}\item If $k\subset L\subset E$ is a tower of finite extensions and we have elements $x\in\mathcal{M}(L)$, $y\in\underline{CH}_{0}(C)^{0}(E)$, then $\Psi'(\Tr_{L/k}(x\otimes\Tr_{E/L}(y)))=
\Psi'(\Tr_{E/k}(x_{E}\otimes y))$.
\item If $x\in\mathcal{M}(E)$, $y\in\underline{CH}_{0}(C)^{0}(L)$, then
$\Psi'(\Tr_{L/k}(\Tr_{E/L}(x)\otimes y))=
\Psi'(\Tr_{E/k}(x\otimes y_{E}))$.
\end{enumerate}
For (1) we can reduce to the case when $y=[Q]-[E(Q):E]P_{0,E}$, for some closed point $Q$ of $C\times E$ with residue field $E(Q)$. Let $Q'$ be the projection of $Q$ in $C\times L$ and $P$ the projection of $Q'$ in $C$. Notice that we have an equality $\Tr_{E/L}([Q])=[E(Q):L(Q')][Q']$. We compute:
\begin{eqnarray*}\Psi'(\Tr_{E/k}(x_{E}\otimes([Q]-[E(Q):E][P_{0,E}])))=
&&\left\{
    \begin{array}{ll}
      \Tr_{E(Q)/k(P)}(x), \;\Ac\;P \\
      -\Tr_{E(Q)/k}(x), \;\Ac\;P_{0}
    \end{array}
  \right.
\end{eqnarray*}
\begin{eqnarray*}&&\Psi'(\Tr_{L/k}(x\otimes
\Tr_{E/L}([Q]-[E(Q):E][P_{0,E}])))=\\&&
\Psi'(\Tr_{L/k}(x\otimes[E(Q):L(Q')]([Q']-[L(Q'):L][P_{0,L}])))\\
&&\left\{
    \begin{array}{ll}
      [E(Q):L(Q')]\Tr_{L(Q')/k(P)}(x), \;\Ac\;P \\
      -[E(Q):L(Q')]\Tr_{L(Q')/k}(x), \;\Ac\;P_{0}
    \end{array}
  \right.
\end{eqnarray*} The claim then follows by the fact that \[ \Tr_{E(Q)/k(P)}(x)=\Tr_{L(Q')/k(P)}\Tr_{E(Q)/L(Q')}(x)=
[E(Q):L(Q')]\Tr_{L(Q')/k(P)}(x).\]
For (2) we can again reduce to the case when $y=[Q]-[L(Q):L][P_{0,L}]$ for some closed point $Q$ of $C\times L$ with residue field $L(Q)$. Notice that we have an equality
\[[Q]_{E}=\sum_{Q'\to Q}e(Q'/Q)[Q'],\] where the sum extends over all closed points $Q'$ of $C\times E$ such that $Q'$ projects to $Q$. We compute
\begin{eqnarray*}\Psi'(\Tr_{L/k}(\Tr_{E/L}(x)\otimes y))=&&
\left\{
         \begin{array}{ll}
           \Tr_{L(Q)/k(P)}(\Tr_{E/L}(x)_{L(Q)}), \;\Ac\;P \\
            -\Tr_{L(Q)/k}(\Tr_{E/L}(x)_{L(Q)}), \;\Ac\;P_{0}
              \end{array}
                 \right.\\=
&&\left\{
         \begin{array}{ll}
           \Tr_{L(Q)/k(P)}(\sum_{Q'\to Q}e(Q'/Q)\Tr_{E(Q')/L(Q)}(x_{E(Q')})), \;\Ac\;P \\
            -\Tr_{L(Q)/k}(\sum_{Q'\to Q}e(Q'/Q)\Tr_{E(Q')/L(Q)}(x_{E(Q')})), \;\Ac\;P_{0}
              \end{array}
                 \right.\\=
&&\left\{
         \begin{array}{ll}
           \sum_{Q'\to Q}e(Q'/Q))\Tr_{E(Q')/k(P)}(x_{E(Q')}), \;\Ac\;P \\
            -\sum_{Q'\to Q}e(Q'/Q))\Tr_{E(Q')/k}(x_{E(Q')}), \;\Ac\;P_{0}.
              \end{array}
                 \right.
\end{eqnarray*} The equality $\displaystyle\Tr_{E/L}(x)_{L(Q)}=\sum_{Q'\to Q}e(Q'/Q)\Tr_{E(Q')/L(Q)}(x_{E(Q')})$ follows from remark 1.3.3, property (MF1) in \cite{IR} if we set $\varphi:\Spec E\to\Spec L$ and $\psi:\Spec L(Q)\to\Spec L$. On the other hand we have
\begin{eqnarray*}\Psi'(\Tr_{E/k}(x\otimes y_{E}))=&&
\Psi'(\sum_{Q'\to Q}e(Q'/Q))\Tr_{E/k}(x\otimes ([Q']-[L(Q):L][P_{0,E}]))\\=
&&\left\{
         \begin{array}{ll}
           \sum_{Q'\to Q}e(Q'/Q))\Tr_{E(Q')/k(P)}(x_{E(Q')}), \;\Ac\;P \\
            -\sum_{Q'\to Q}e(Q'/Q))\Tr_{E(Q')/k}(x_{E(Q')}), \;\Ac\;P_{0}
              \end{array}
                 \right.
\end{eqnarray*}
Next we need to show that the maps $\Phi'$, $\Psi'$ are each other inverses.
It is immediate that the composition $\Psi'\Phi'$ is the identity map. For the other composition, we consider an element
$x\otimes ([Q]-[L(Q):L][P_{0,L}])\in T(\mathcal{M},\underline{CH}_{0}(C)^{0})(L)$. If $L(Q)$ is the residue field of $Q$, then $Q$ induces an $L(Q)$-rational point $\widetilde{Q}$ of $C\times L(Q)$. Then we have an equality $\Tr_{L(Q)/L}([\widetilde{Q}])=[Q]$. By the projection formula we get an equality
\[x\otimes([Q]-[L(Q):L][P_{0,L}])=\Tr_{L(Q)/L}(x_{L(Q)}\otimes(
[\widetilde{Q}]-[P_{0,L(Q)}])),\] we are therefore reduced to the case $L(Q)=L$. Then we have
\begin{eqnarray*}\Phi'\Psi'(\Tr_{L/k}(x\otimes([Q]-[P_{0,L}])))=&&
\Tr_{P/k}(\Tr_{L/P}(x)\otimes ([P]-[P_{0,k(P)}]))=\\
&&\Tr_{P/k}\Tr_{L/P}(x\otimes\res_{L/P}([P]-[P_{0,k(P)}]))=\\
&&\Tr_{L/k}(x\otimes[Q]-[P_{0,L}]).
\end{eqnarray*} This completes the proof of the theorem.

\end{proof}
\begin{rem} We note here that for the algebraically closed field case if instead of the assumption \ref{AS1}, we had made the stronger assumption \ref{AS3}, the proof of the proposition \ref{Phi} would have become simpler. The only reason we used \ref{AS1} is that in general the problem of computing the symbol $(g;f)_{P}$ locally is rather hard and is known only in very few cases, namely for homotopy invariant Nisnevich sheaves with transfers, as the next example indicates.
\end{rem}
\begin{exmp} Let $k\in\mathcal{E}_{F}$ be any perfect field. Let $\mathcal{F}_{1},\dots,\mathcal{F}_{r}$ be homotopy invariant Nisnevich sheaves with transfers. Then as mentioned in the previous section, the main theorem of \cite{KY} gives an isomorphism
\[T(\mathcal{F}_{1},\dots,\mathcal{F}_{r})(L)\simeq K^{geo}(L;\mathcal{F}_{1},\dots,\mathcal{F}_{r})\simeq K(L;\mathcal{F}_{1},\dots,\mathcal{F}_{r}),\] where  $K(L;\mathcal{F}_{1},\dots,\mathcal{F}_{r})$ is the Somekawa type $K$-group (\cite{KY},def. 5.1) and $L/k$ is any finite extension. In particular, let $C$ be a smooth, complete and geometrically connected curve over $k$ and $P\in C$ be  a closed point. As in the proof of lemma \ref{HI}, we can reduce to the case when $\mathcal{F}_{i}$ is curve-like, for $i=1,\dots,r$. To describe the local symbol, it therefore suffices to consider sections  $g_{i}\in\mathcal{F}_{i}(\eta_{C})$ with disjoint supports. In this case, if $P$ is in the support of $g_{i}$ for some $i\in\{1,\dots,r\}$ and
$f\in k(C)^{\times}$ is a function, then we have the following explicit local description of $(g_{1}\otimes\dots\otimes g_{r};f)_{P}$.
\[(g_{1}\otimes\dots\otimes g_{r};f)_{P}=\Tr_{P/k}(s_{P}(g_{1})\otimes\dots\otimes\partial_{P}(g_{i},f)
\otimes\dots\otimes s_{P}(g_{r})).\] Moreover, $\underline{CH}_{0}(C)^{0}$ is itself a homotopy invariant Nisnevich sheaf with transfers. Namely, $\underline{CH}_{0}(C)^{0}\in RF_{0}$ and hence the above formula implies  that the assumption \ref{AS3} is satisfied.
\end{exmp}

\vspace{3pt}

{99}

\end{document}